\numberwithin{equation}{section}
\newtheorem{theorem}{Theorem}[section]
\newtheorem{lemma}[theorem]{Lemma}
\newtheorem{proposition}[theorem]{Proposition}
\theoremstyle{definition}
\newtheorem{definition}[theorem]{Definition}
\newtheorem{remark}[theorem]{Remark}
\newcommand{\restr}{\mathop{\raisebox{-.127ex}{\reflectbox{\rotatebox[origin=br]{-90}{$\lnot$}}}}}
\newcommand{\R}{\mathbb{R}}
\newcommand{\N}{\mathbb{N}}
\newcommand{\Z}{\mathbf{Z}}
\newcommand{\eps}{\varepsilon}
\newcommand{\be}{\begin{equation}}
\newcommand{\ee}{\end{equation}}
\newcommand\lt{\left}
\newcommand\rt{\right}
\def\les{\lesssim}
\def\ges{\gtrsim}
\def\EE{\mathbb{E}}
\def\PP{\mathbb{P}}
\def\diam{\operatorname{diam}}
\newcommand{\bra}[1]{\left( #1 \right)}
\newcommand{\sqa}[1]{\left[ #1 \right]}
\newcommand{\cur}[1]{\left\{ #1 \right\}}
\newcommand{\abs}[1]{\left| #1 \right|}
\def\fref{f^{\mathrm{ref}}}
\def\frefinf{f_{\infty}^{\mathrm{ref}}}
\def\fbi{f^{\textrm{bi}}}
\def\fbiinf{f_{\infty}^{\textrm{bi}}}
\title{%(Thermodynamic limit)
Convergence of asymptotic costs for random Euclidean matching problems}
\author[M. Goldman]{Michael Goldman}
\address{M.G.: Universit\'e de Paris, CNRS, Sorbonne-Universit\'e,  Laboratoire Jacques-Louis Lions (LJLL), F-75005 Paris, 
France}
\email{goldman@u-paris.fr}
\author[D. Trevisan]{Dario Trevisan}
\address{D.T.: Dipartimento di Matematica, Università degli Studi di Pisa, 56125 Pisa, Italy  }
\email{dario.trevisan@unipi.it}
\date{}
\subjclass[2010]{60D05, 90C05, 39B62, 60F25, 35J05}
\keywords{Matching problem, optimal transport, geometric probability}
\begin{document}

\begin{abstract}
We investigate the average minimum cost of a bipartite matching between two samples of $n$ independent random points uniformly distributed on a unit cube in $d\ge 3$ dimensions,  where the matching cost between two points is given by any power $p \ge 1$ of their Euclidean 
distance. As $n$ grows, we prove convergence, after a suitable renormalization, towards a finite and positive constant.
We also consider the analogous problem of optimal transport between $n$ points and the uniform measure. The proofs combine sub-additivity inequalities with a PDE ansatz similar to the one proposed in the context of the matching problem in two dimensions and later extended to obtain upper bounds in higher dimensions.
\end{abstract}
\maketitle

\section{Introduction}
The aim of this paper is to extend the results of \cite{DoYu95,BoutMar,BaBo,DeScSc13} on the existence of the thermodynamic limit for some random optimal matching problems. Because of their relations to computer science, statistical physics and  quantization of measures, optimal matching problems 
have been the subject of intense research both from the mathematical and physical communities.
We refer for instance to \cite{Yu98,Ta14, CaLuPaSi14} for more details in particular regarding the vast literature.

Probably the simplest and most studied variant of these problems is the bipartite (or Euclidean bipartite) matching on the unit cube in $d$ dimensions. Given $p\ge 1$ and  two independent families of i.i.d.\ random variables $(X_i)_{i\ge 1}$ and $(Y_i)_{i \ge 1}$ with common law the uniform (Lebesgue) measure on $[0,1]^d$, the problem is to understand the behavior for large $n$ of 
\[
 \EE\lt[\frac{1}{n}\min_{\pi} \sum_{i=1}^n |X_i-Y_{\pi(i)}|^p\rt],
\]
where the minimum is taken  over all permutations $\pi$ of $\{1,\ldots, n\}$. It is by now well-known, see \cite{AKT84, BaBo, BoLe16, Le17} that for\footnote{The notation $A\ll 1$, which we only use in assumptions, means that there exists an $\eps>0$ only depending on the dimension $d$ and on $p\ge 1$, such that
if $A\leq \eps$ then the conclusion holds.
 Similarly, the notation $A\les B$, which we use in output statements, means that there exists a global
constant $C>0$ depending on the dimension $d$ and on $p\ge1$ such that $A\le C B$. We write $A\sim B$ if both $A\les B$ and $B\les A$.} 
 $n\gg1$ (see \cite{FoGu15} for some non-asymptotic bounds)
\begin{equation}\label{eq:boundE}
 \EE\lt[\frac{1}{n}\min_{\pi} \sum_{i=1}^n |X_i-Y_{\pi(i)}|^p\rt]\sim\begin{cases}
                                                                      n^{-\frac{p}{2}} &\textrm{ for } d=1\\
                                                                      \lt(\frac{\log n}{n}\rt)^{\frac{p}{2}} & \textrm{ for } d=2\\
                                                                      n^{-\frac{p}{d}} & \textrm{ for } d\ge 3.
                                                                     \end{cases}
\end{equation}
Let us point out that while the case $d\ge 3$, $p\ge d/2$ is not explicitly covered in the literature, the proof of \cite{Le17} clearly extends to any $p\neq 2$ (see also \cite{BobLe19}). Our main result is the following:
\begin{theorem}\label{theo:biintro}
 For every $d\ge 3$ and $p\ge 1$, there exists a  constant $\fbiinf=\fbiinf(p,d)>0$ such that 
 \begin{equation}\label{eq:theobi}
  \lim_{n\to \infty} n^{\frac{p}{d}}\EE\lt[\frac{1}{n}\min_{\pi} \sum_{i=1}^n |X_i-Y_{\pi(i)}|^p\rt]=\fbiinf.
 \end{equation}
\end{theorem}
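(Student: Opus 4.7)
The plan is to prove existence of the limit via a subadditivity argument applied to the Poissonized version of the problem, with cross-subcube imbalance corrections handled by the PDE ansatz of Caracciolo et al.

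First, I would Poissonize: let $\Xi^X_L$ and $\Xi^Y_L$ be independent Poisson point processes of intensity $1$ on $Q_L := [0,L]^d$ with associated empirical measures $\mu^X_L, \mu^Y_L$, and set
\[
g(L) := \frac{1}{L^d} \EE\!\left[W_p^p(\mu^X_L, \mu^Y_L)\right],
\]
where the unequal-cardinality issue is resolved by extending both measures to a common mass via auxiliary transports to the uniform measure on $Q_L$, at a cost controlled by the monopartite matching problem and \eqref{eq:boundE}. Standard depoissonization reduces \eqref{eq:theobi} to showing that $g(L)$ admits a finite, positive limit as $L \to \infty$.

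Next, I would establish approximate subadditivity. Fix $k \ge 2$ and partition $Q_{kL}$ into $k^d$ subcubes $Q_j$ of side $L$, with counts $N^X_j, N^Y_j$ and imbalances $D_j := N^X_j - N^Y_j$. A matching obtained by first equalizing the subcube counts via a coarse transport and then matching within each subcube yields
\[
W_p^p(\mu^X_{kL}, \mu^Y_{kL}) \le \sum_{j=1}^{k^d} W_p^p(\mu^X_L|_{Q_j}, \mu^Y_L|_{Q_j}) + R_k(L),
\]
so that $g(kL) \le g(L) + (kL)^{-d} \EE[R_k(L)]$. The coarse transport is built via the PDE ansatz: solve the Neumann problem $-\Delta u = \sum_j D_j |Q_j|^{-1} \mathbf{1}_{Q_j}$ on $Q_{kL}$ and read off $-\nabla u$ as the transport vector field, so $R_k(L)$ is controlled by $\int_{Q_{kL}} |\nabla u|^p \, dx$. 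Sharp $L^p$ elliptic estimates together with independence of the $D_j$'s and $\EE[D_j^2] = 2 L^d$ should yield $(kL)^{-d} \EE[R_k(L)] =: \eps(L) \to 0$, giving an approximate Fekete-type inequality $g(kL) \le g(L) + \eps(L)$. A standard argument then shows $\limsup_M g(M) \le \liminf_L g(L)$ and hence convergence, with the limit finite and positive by \eqref{eq:boundE}. Finally, a depoissonization step, standard in this context, transfers the result from Poisson to i.i.d.\ samples.

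The main obstacle is controlling $R_k(L)$ for all $p \ge 1$. The Poisson imbalance $D_j$ has size $\sim L^{d/2}$, so a naive transport moving mass $\sim L^{d/2}$ through distance $\sim L$ costs $\sim L^{p + d/2}$ per subcube, totalling $k^d L^{p + d/2}$, which is only $o((kL)^d)$ when $p < d/2$. For larger $p$ one must exploit the diffusive smoothing of the elliptic Green's function, the independence of the $D_j$'s across subcubes, and Calder\'on--Zygmund type $L^p$ bounds for the Poisson equation with random forcing. This is where the authors' previous developments of the PDE ansatz in higher dimensions enter in an essential way, and where the restriction $d \ge 3$ becomes important: the analogous estimate in $d = 2$ carries a logarithmic correction reflecting the $\log n$ in \eqref{eq:boundE}.
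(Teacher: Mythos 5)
Your overall architecture (Poissonization, block subadditivity on cubes, a PDE/elliptic estimate for the coarse imbalance, then de-Poissonization) is the same as the paper's, but there is a genuine gap at exactly the point where the bipartite problem is hard. You propose to equalize the subcube counts by solving $-\Delta u=\sum_j D_j|Q_j|^{-1}\chi_{Q_j}$ and to bound the coarse-transport cost $R_k(L)$ by $\int_{Q_{kL}}|\nabla u|^p$. That bound is not available here: the Benamou--Brenier/flow estimate behind the PDE ansatz controls $W_p^p$ by $\int |\nabla u|^p/\rho^{p-1}$ only along an interpolation whose density $\rho$ is bounded \emph{below}, and in the bipartite problem the measures being corrected are atomic (empirical), so $\inf\rho=0$ and the estimate degenerates. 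This is precisely why the paper cannot apply its Lemma \ref{lem:CZ} directly in Proposition \ref{prop:subbi} and instead introduces the $\theta$-trick: a small fraction $\theta$ of the point masses is first sent to the local uniform measure (cost per unit volume $\lesssim\theta$, using boundedness of the local matching-to-reference cost \eqref{eq:boundfR}), the density imbalance $|\kappa-\kappa_i|\sim L^{-d/2}$ is then absorbed at the level of these uniform pieces via \eqref{eq:estimCZ} (cost $\lesssim \diam^p(R)\,|\kappa-\kappa_i|^p/\theta^{p-1}$), and optimizing $\theta\sim|R|^{-\frac{d-2}{2d}}$ gives the summable error $|R|^{-\frac{d-2}{2d}}$ for all $p\ge1$, $d\ge3$. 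Your proposed substitutes (independence of the $D_j$, smoothing of the Green function, Calder\'on--Zygmund bounds with random forcing) do not remove the atomicity obstruction, so as written your argument only recovers the classical range $p<d/2$ that your own heuristic identifies.

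Two further points are glossed over. First, for $p>1$ costs do not add under concatenation of transports, so the inequality $W_p^p(\mu^X_{kL},\mu^Y_{kL})\le\sum_j W_p^p+R_k(L)$ cannot hold as stated (also the restrictions $\mu^X\restr Q_j$, $\mu^Y\restr Q_j$ have unequal masses and must be re-weighted); one needs the triangle inequality for $W_p$ together with the splitting $(a+b)^p\le(1+\eps)a^p+C\eps^{1-p}b^p$, as in Lemma \ref{lem:main}, and one must then check that the $\eps$-optimization still leaves a summable error across dyadic scales — this is where the exponent $\frac{d-2}{2p}$ in Theorem \ref{theo:bi} comes from. Second, de-Poissonization is not entirely off the shelf for all $p\ge1$: the paper proves a dedicated statement (Proposition \ref{prop:depois}) whose key input is the monotonicity $f(1|n)\le f(1|m)$ for $m\le n$, obtained by averaging over $m$-point subsets and convexity of the transport cost; some argument of this kind is needed to pass from the Poisson functional to a deterministic number of points without losing the constant.
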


This extends earlier results of \cite{DoYu95,BoutMar,BaBo,DeScSc13} where the same conclusion was obtained under the more restrictive condition $p<d/2$.
See also \cite{Ta92} for bounds on $ \fbiinf(1,d)$ as $d$ becomes large. As in the previously quoted papers, our proof is based on a sub-additivity argument and makes use of the classical observation that, thanks to the Birkhoff-Von Neumann Theorem, the bi-partite matching problem is actually an optimal transport problem. Indeed, if $\mu=\sum_{i=1}^n \delta_{X_i}$ and $\lambda=\sum_{i=1}^n \delta_{Y_i}$ 
are the associated empirical measures, then 
\[
 \min_{\pi} \sum_{i=1}^n |X_i-Y_{\pi(i)}|^p=W^p_p\lt(\mu,\lambda\rt),
\]
where $W_p$ denotes the Wasserstein distance  of order $p$ (see \cite{Viltop,Santam}). However, the papers \cite{BoutMar,BaBo,DeScSc13} rely then upon combinatorial arguments, and in fact their results apply to a larger class of random optimization problems, while \cite{DoYu95} strongly uses the dual formulation of optimal transport, which in the case $p=1$ is quite specific, since it becomes  a maximization over the set of $1$-Lipschitz functions.

The optimal transport point of view allows us to treat the defect in sub-additivity as a defect in local distribution of mass rather than a defect in local distribution of points. 
More precisely, even if $\mu([0,1]^d)=\lambda([0,1]^d)$ it is in general not true that for a given partition of $[0,1]^d$ in sub-cubes $Q_i$, $\mu(Q_i)=\lambda(Q_i)$. 
Therefore, in order to use sub-additivity, one needs to relax the definition of the  matching problem to take into account this disparity. In \cite{BoutMar,BaBo,DeScSc13} this is done by requiring that as many points as possible are matched.
Here we allow instead for varying  weights.  That is, for $\mu$ and $\lambda$ containing potentially a different number of points, we consider the problem
\[
 \EE\lt[W^p_p\lt(\frac{\mu}{\mu( [0,1]^d)},\frac{\lambda}{\lambda([0,1]^d)}\rt)\rt].
\]   
The main sub-additivity argument for this quantity is contained in Lemma \ref{lem:main}.
In order  to estimate the error in sub-additivity, we then use in Lemma \ref{lem:CZ} a PDE ansatz similar to the one proposed in the context of the matching problem in
\cite{CaLuPaSi14} and then used in \cite{AmStTr16} (see also \cite{Le17,GHO,BeCa}) to show that when $d=2$
\[
 \lim_{n\to \infty}\frac{n}{\log n} \EE\lt[\frac{1}{n}W_2^2(\mu,\lambda)\rt]=\frac{1}{2\pi}.
\]
Notice however that our use of this linearization ansatz is quite different from \cite{CaLuPaSi14,AmStTr16}. 
Indeed, for us the main contribution to the transportation cost is given by the transportation at the smaller scales and linearization is only used to estimate the higher order error term. 
On the other hand, in \cite{CaLuPaSi14,AmStTr16} (see also \cite{AmGlau}), the main contribution to the cost is given by the   linearized i.e. $H^{-1}$, cost. 
This is somewhat in line with the prediction by \cite{CaLuPaSi14} that for $d\ge 3$, the first order contribution to the Wasserstein cost is \textit{not} given by the linearized problem while higher order corrections are. 
In any case,  we give in  Proposition \ref{prop:grid} an alternative argument to estimate the error term without relying on the PDE ansatz. There we use instead an elementary comparison argument with 
one-dimensional transport plans. We included both proofs since we believe that they could prove useful in other contexts.

 Let us make a few more comments on the proof of Theorem \ref{theo:biintro}. As in \cite{DoYu95,BoutMar,BaBo},  the proof of \eqref{eq:theobi} is actually first done on a variant of the problem where the number of points follows a Poisson distribution instead of being deterministic. 
This is due to the fact that the restriction of a Poisson point process is again a Poisson point process. For this variant of the problem, rather than working on a fixed cube $[0,1]^d$
with an increasing density of points, we prefer to   make a blow-up at a scale $L=n^{1/d}$ and consider in Theorem \ref{theo:bi}, a fixed Poisson point process of intensity $1$ on $\R^d$ but restricted to  cubes $Q_L=[0,L]^d$ with $L\gg1$ (hence the terminology thermodynamic limit). 
We believe that the sub-additivity argument is slightly clearer in these variables (a similar rescaling is actually  implicitly used in \cite{BoutMar,BaBo}). This setting is somewhat reminiscent of \cite{HuSt13}, 
where super-additivity is used to construct an optimal coupling between the Poisson point process and the Lebesgue measure on $\R^d$. 
In order to pass from the Poisson version of the problem to the deterministic one, we prove a general \emph{de-Poissonization} result in Proposition \ref{prop:depois} which can hopefully be useful in other contexts. 

Besides the bipartite matching we also treat in Theorem \ref{theo:PoiLeb} and Theorem \ref{theo:match-ref-measure-unit-cube} the case of the matching to the reference measure. 
We actually treat this problem first since the proof is a bit simpler. Indeed, while the general scheme of the proof is identical to the bipartite case, the PDE ansatz 
used in Lemma \ref{lem:CZ} works well for ``regular'' measures and  a more delicate argument is required for the bipartite matching.  Notice that by Jensen and triangle inequalities, \eqref{eq:boundE} also holds for the matching to the reference measure. 

We  point out that in \cite{DoYu95, BaBo, DeScSc13}, it is more generally proven that if the points $X_i$ and $Y_i$ have a common law $\mu$ supported in $[0,1]^d$ instead of the Lebesgue measure 
(for measures with unbounded support a condition on the moments is required), then for $1\le p<d/2$ and $d\ge 3$,
\[
 \limsup_{n\to \infty} n^{\frac{p}{d}}\EE\lt[\frac{1}{n}\min_{\pi} \sum_{i=1}^n |X_i-Y_{\pi(i)}|^p\rt]\le \fbiinf\int_{[0,1]^d} \lt(\frac{d\mu}{dx}\rt)^{1-\frac{p}{d}}.
\]
%For $p=1$, it is even proved in \cite{DoYu95} that the limit exists and equality holds.
 However, when $p>d/2$ and  without additional assumptions on $\mu$, the asymptotic rate may be different and thus  this inequality may fail, see e.g. \cite{FoGu15}. 
 Positive results for specific densities can be obtained nonetheless. For instance,  it is proven  in \cite{ledoux2019optimal} that for the standard Gaussian measure $\mu$ on $\R^d$, $d \ge 3$, the asymptotic bound
\[ n^{\frac{p}{d}} \EE\lt[\frac{1}{n}\min_{\pi} \sum_{i=1}^n |X_i-Y_{\pi(i)}|^p\rt]   \sim 1 \]
holds true also for $d/2 \le p < d$. 

Finally, we notice that usual results on concentration of measure allow us to improve from convergence of the expectations to strong convergence. However, we are able to cover only the case $1\le p < d$, see Remark~\ref{rem:strong-convergence}.

The plan of the paper is the following. In Section \ref{Sec:not}, we fix some notation and recall basic moment and concentration bounds for Poisson random variables. 
In Section \ref{Sec:main}, we state and prove our two main lemmas namely the sub-additivity estimate Lemma \ref{lem:main} and the error estimate Lemma \ref{lem:CZ}. 
In Section \ref{sec:matching-ref}, we then prove the existence of 
the thermodynamic limit for the matching problem of a Poisson point process to the reference measure. The analog result for the bipartite matching between two Poisson point processes 
is  obtained in Section \ref{sec:bipartite}. Finally, in Section \ref{Poitostan} we pass from the Poissonized problem to the deterministic one and discuss stronger convergence results.   

\section{Notation and preliminary results }\label{Sec:not}

We use the notation $|A|$ for the Lebesgue measure of a Borel set $A \subseteq \R^d$, and $\int_A f$ for the Lebesgue integral of a function $f$ on $A$. For $L>0$, we let $Q_L=[0,L]^d$. We denote by $|p|$ the  Euclidean norm of a vector $p\in \R^N$. For a function $\phi$, we use the notation $\nabla \phi$ for the gradient, $\nabla\cdot \phi$ for the divergence, $\Delta \phi$ for the Laplacian and $\nabla^2 \phi$ for the Hessian. 

\subsection{Optimal transport}

In this section we introduce some notation for the Wasserstein distance and recall few simple properties that will be used throughout. Proofs can be found in any of the monographs \cite{Viltop, Santam, peyre2019computational} with expositions of theory of optimal transport, from different perspectives.

Given $p \ge 1$, a Borel subset $\Omega \subseteq \R^d$ and two positive Borel measures $\mu$, $\lambda$ with $\mu(\Omega) = \lambda(\Omega) \in (0, \infty)$ and finite $p$-th moments, the Wasserstein distance of order $p\ge 1$ between $\mu$ and $\lambda$ is defined as the quantity
\[ W_p(\mu, \lambda) = \left(\min_{\pi\in\mathcal{C}(\mu,\lambda)} \int_{\R^d\times\R^d} {|x-y|}^p d \pi(x,y)\right)^\frac{1}{p},\]
where $\mathcal{C}(\mu, \lambda)$ is the set of couplings between $\mu$ and $\lambda$.
% 
% i.e., positive Borel measures on $\Omega\times \Omega$ such that the first and second marginals are respectively $\mu$ and $\lambda$, i.e.,
% \[ \pi \bra{ A \times \Omega} = \mu(A), \quad \pi \bra{ \Omega \times B} = \lambda(B),\]
% for Borel sets $A$, $B \subseteq \Omega$. 
Moreover, if $\mu(\Omega) = \lambda(\Omega) = 0$, we define $W_p(\mu, \lambda) = 0$, while if $\mu(\Omega) \neq \lambda(\Omega)$, we let $W_p(\mu, \lambda) = \infty$.\\
%Usually one assumes that $\mu$, $\nu$ are probabilities, but for our purposes it is convenient to deal with general positive measures. When restricted to probabilities, $W_p$ is a distance metrizing weak convergence, together with moments of order $p$. In particular, the triangle inequality holds:
Let us recall that since $W_p$ is a distance, we have the triangle inequality
\begin{equation}\label{eq:triangle}
W_p( \mu, \nu) \le W_p(\mu, \lambda) + W_p(\nu, \lambda).
\end{equation}
We will also  use the classical sub-additivity inequality 
\begin{equation}\label{eq:sub} W_p^p\bra{ \sum_{i} \mu_i, \sum_{i} \lambda_i} \le \sum_i W^p_p(\mu_i, \lambda_i),\end{equation}
for a finite set of positive measures $\mu_i$, $\lambda_i$.
This follows from the observation that if $\pi_i \in \mathcal{C}(\mu_i, \lambda_i)$, then $\sum_i \pi_i \in \mathcal{C}(\sum_i \mu_i, \sum_i\lambda_i)$.

\begin{remark}
In fact, our results deal with the \emph{transportation cost} $W_p^p(\mu, \lambda)$ rather than $W_p(\mu, \lambda)$. To keep notation simple, we write 
\[ W^p_{\Omega} (\mu, \lambda)  =  W_p^p (\mu\restr\Omega, \lambda\restr \Omega).\]
Moreover, if a measure is absolutely continuous with respect to Lebesgue measure, we only write its density. For example,
\[ W^p_{\Omega} \bra{ \mu,  \frac{\mu(\Omega)}{|\Omega|} },\]
denotes the transportation cost between $\mu \restr \Omega$ to the uniform measure on $\Omega$ with total mass $\mu(\Omega)$.

Occasionally we may write $W_{\Omega}(\mu, \lambda)$ instead of $\lt(W_{\Omega}^p(\mu, \lambda)\rt)^{1/p}$. This may lead to some ambiguity, but it should be clear from the context.% Let us also stress that $p$ is kept fixed in our results.
\end{remark}

\subsection{Poisson point processes}

As in \cite{DoYu95,BoutMar,BaBo}, we exploit invariance properties of Poisson point processes on $\R^d$ with uniform intensity in order to obtain simpler sub-additivity estimates.
We refer e.g.~to \cite{last2017lectures} for a general introduction to Poisson point processes. Here we only recall that a Poisson point process on $\R^d$ with intensity one can be defined as a random variable taking values on locally finite atomic measures
\[ \mu = \sum_{i} \delta_{X_i}\]
such that, for every $k \ge 1$, for any disjoint Borel sets $A_1,\ldots, A_k \subseteq \R^d$, the random variables $\mu(A_1),\ldots, \mu(A_k)$ are independent and $\mu(A_i)$ has a Poisson distribution of parameter $|A_i|$,
for every $i=1, \ldots, k$.
In particular, if $A\subseteq \R^d$ is Lebesgue negligible, then $\mu(A) = 0$ almost surely.

Existence of a Poisson point process of intensity one is obtained via a superposition argument, noticing that on every bounded subset $\Omega \subseteq \R^d$ the law of $\mu \restr \Omega$ can be  easily  described: conditionally on 
$\mu(\Omega) = n$, the measure $\mu \restr \Omega$ has the same law as the random measure
\[ \sum_{i=1}^n \delta_{X_i},\]
where $(X_i)_{i=1}^n$ are independent random variables with uniform law on $\Omega$. Uniqueness in law can be also obtained, so that translation invariance of Lebesgue measure entails that the process is stationary, i.e., any deterministic translation of the random measure $\mu$ leaves its law unchanged.

Let us finally recall the classical Cram\'er-Chernoff concentration bounds for Poisson random variables.
\begin{lemma}\label{lem:concentrationPoi}
 Let $N$ be a Poisson random variable with parameter $n\gg1$. Then, for every $ t>0$
 \begin{equation}\label{eq:Cramer}
  \PP[|N-n|\ge t]\le 2\exp\lt(-\frac{t^2}{2(t+n)}\rt).
 \end{equation}
As a consequence, for every $q\ge 1$,
\begin{equation}\label{eq:momentPoi}
 \EE\lt[|N-n|^q\rt]\les_q n^{\frac{q}{2}}.
\end{equation}

\end{lemma}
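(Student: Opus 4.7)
The plan is to treat the two parts in order, since the moment bound \eqref{eq:momentPoi} follows from \eqref{eq:Cramer} by a standard layer-cake argument.

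For the concentration bound \eqref{eq:Cramer}, I would use the Cram\'er–Chernoff method separately on the upper and lower tails. Recall that for $N$ Poisson of parameter $n$, the moment generating function is $\EE[e^{\lambda N}] = \exp(n(e^\lambda - 1))$. Applying Markov's inequality to $e^{\lambda N}$ and optimizing in $\lambda > 0$ gives
\[ \PP[N \ge n + t] \le \exp\lt(-n h(t/n)\rt), \quad h(x) = (1+x)\log(1+x) - x, \]
and similarly for the lower tail $\PP[N \le n - t] \le \exp(-n h(-t/n))$ for $0 < t \le n$ (note the lower tail is only relevant in that range, while for $t > n$ one has $\PP[N \le n - t] = 0$). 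To reach the stated form, I would use the elementary inequality $h(x) \ge x^2/(2(1+x))$ for $x \ge -1$ (and the analogous one for the lower branch, which is actually sharper), which gives $n h(t/n) \ge t^2/(2(n+t))$. A union bound on the two tails produces the factor $2$.

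For the moment bound \eqref{eq:momentPoi}, I would integrate by parts:
\[ \EE[|N-n|^q] = \int_0^\infty q t^{q-1} \PP[|N-n| \ge t]\, dt \le 2 \int_0^\infty q t^{q-1} \exp\lt(-\frac{t^2}{2(t+n)}\rt) dt. \]
Split the integral at $t = n$. On $\{t \le n\}$, the exponent dominates $t^2/(4n)$, so the integrand is sub-Gaussian and the contribution is $\lesssim_q n^{q/2}$ by a change of variables $s = t/\sqrt{n}$. On $\{t \ge n\}$, the exponent dominates $t/4$, so the tail is essentially exponential and contributes an amount that is, for $n \gg 1$, exponentially small compared to $n^{q/2}$. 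Summing the two gives the claim.

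There is no real obstacle here: both steps are textbook. The only mild point of care is the slight asymmetry between the upper and lower Poisson tails when passing from $h(x)$ to the quadratic form in \eqref{eq:Cramer}; using the weaker lower bound $h(x)\ge x^2/(2(1+x))$ uniformly for both signs (with the convention that it is applied on the relevant range $-1 \le x$) sidesteps this without loss.
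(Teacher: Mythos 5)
Your proof is correct and follows essentially the same route as the paper: the moment bound \eqref{eq:momentPoi} is obtained by the same layer-cake representation with a split of the integral into a sub-Gaussian and an exponential regime, while for the tail bound \eqref{eq:Cramer} you simply carry out the standard Cram\'er--Chernoff computation that the paper quotes from the literature. One small caution: the literal inequality $h(x)\ge \frac{x^2}{2(1+x)}$ fails for $x<0$ (e.g.\ at $x=-1/2$), but what the lower tail actually requires is $h(-x)\ge \frac{x^2}{2(1+x)}$ for $0\le x\le 1$, which follows from the sharper bound $h(-x)\ge x^2/2$ that you already mention, so the argument stands.
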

\begin{proof}
 For the concentration bound \eqref{eq:Cramer}, see for instance \cite{BouLuMa}. By the layer-cake representation, 
 \begin{align*}
  \EE\lt[|N-n|^q\rt]&\les \int_0^\infty t^{q-1}\exp\lt(-\frac{t^2}{2(t+n)}\rt) dt\\
  &\les \int_0^{\sqrt{n}} t^{q-1} dt+\int_{\sqrt{n}}^n t^{q-1} \exp\lt(-c \frac{t^2}{n}\rt) dt+\int_n^\infty t^{q-1}\exp(-ct) dt\\
  &\les_q n^{\frac{q}{2}}+  n^{\frac{q}{2}} +n^{q-1}\exp(-cn)\les_q n^{\frac{q}{2}}.
 \end{align*}
\end{proof}

\section{The main lemmas}\label{Sec:main}
Our sub-additivity argument rests on a general but relatively simple lemma (which we only apply here for rectangles).
\begin{lemma}\label{lem:main}
 For every $p\ge 1$, there exists a constant $C>0$ depending only on $p$ such that the following holds. For every Borel set  $\Omega\subset \R^d$, every Borel partition $(\Omega_i)_{i\in \N}$ of $\Omega$,
  every measures $\mu$, $\lambda$ on $\Omega$, and every $\eps\in(0,1)$, 
 \begin{equation}\label{eq:mainsub}
  W^p_{\Omega}\lt(\mu, \frac{\mu(\Omega)}{\lambda(\Omega)}\lambda\rt)\le (1+\eps)\sum_i W^p_{\Omega_i}\lt(\mu,\frac{\mu(\Omega_i)}{\lambda(\Omega_i)}\lambda\rt)+ \frac{C}{\eps^{p-1}}W^p_\Omega\lt(\sum_i \frac{\mu(\Omega_i)}{\lambda(\Omega_i)}\chi_{\Omega_i}\lambda,\frac{\mu(\Omega)}{\lambda(\Omega)}\lambda\rt).
 \end{equation}

\end{lemma}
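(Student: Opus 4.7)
The plan is to reduce the two-scale transport from $\mu$ to the rescaled $\lambda$ to a ``local + global'' decomposition through an intermediate measure that equidistributes the mass $\mu(\Omega_i)$ against $\lambda$ inside each cell $\Omega_i$.

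First I would introduce
\[ \nu := \sum_i \frac{\mu(\Omega_i)}{\lambda(\Omega_i)} \chi_{\Omega_i}\lambda, \]
with the convention that a summand is zero whenever $\lambda(\Omega_i) = 0$ (in which case $\mu(\Omega_i) = 0$ is forced, otherwise the right-hand side of \eqref{eq:mainsub} is $+\infty$ and there is nothing to prove). A direct computation gives $\nu(\Omega) = \sum_i \mu(\Omega_i) = \mu(\Omega)$, so $\nu$ is an admissible intermediate distribution with the correct total mass. Moreover, on each cell $\Omega_i$ it has exactly the same mass as $\mu$, so the measures $\mu\restr\Omega_i$ and $\nu\restr\Omega_i$ are admissibly matched with cost $W^p_{\Omega_i}(\mu, \mu(\Omega_i)\lambda/\lambda(\Omega_i))$.

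Next, by the triangle inequality \eqref{eq:triangle} applied to $\mu$, $\nu$ and $\mu(\Omega)\lambda/\lambda(\Omega)$,
\[ W_\Omega\!\bra{\mu, \tfrac{\mu(\Omega)}{\lambda(\Omega)}\lambda} \le W_\Omega(\mu,\nu) + W_\Omega\!\bra{\nu, \tfrac{\mu(\Omega)}{\lambda(\Omega)}\lambda}.\]
Raising to the $p$-th power and using the elementary Young-type inequality $(a+b)^p \le (1+\eps)a^p + C_p\eps^{-(p-1)}b^p$ for $a,b\ge 0$, $p\ge 1$, $\eps\in(0,1)$ (which follows, for $p>1$, from the convexity bound $(a+b)^p = (\theta \tfrac{a}{\theta} + (1-\theta)\tfrac{b}{1-\theta})^p \le \theta^{1-p}a^p + (1-\theta)^{1-p}b^p$ upon choosing $\theta = (1+\eps)^{-1/(p-1)}$, and is trivial for $p=1$), we obtain
\[ W^p_\Omega\!\bra{\mu, \tfrac{\mu(\Omega)}{\lambda(\Omega)}\lambda} \le (1+\eps)\, W^p_\Omega(\mu,\nu) + \frac{C}{\eps^{p-1}}\, W^p_\Omega\!\bra{\nu, \tfrac{\mu(\Omega)}{\lambda(\Omega)}\lambda}.\]

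Finally, since $(\Omega_i)_i$ is a partition and both $\mu$ and $\nu$ split as $\mu = \sum_i \mu\restr\Omega_i$ and $\nu = \sum_i \nu\restr\Omega_i$ with the $i$-th summands supported in the (disjoint) sets $\Omega_i$, the sub-additivity inequality \eqref{eq:sub} yields
\[ W^p_\Omega(\mu,\nu) \le \sum_i W^p_{\Omega_i}\!\bra{\mu, \tfrac{\mu(\Omega_i)}{\lambda(\Omega_i)}\lambda}.\]
Plugging this into the previous display gives precisely \eqref{eq:mainsub}, recalling that $\nu = \sum_i \frac{\mu(\Omega_i)}{\lambda(\Omega_i)}\chi_{\Omega_i}\lambda$ in the last term. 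There is no real obstacle: the proof is an interpolation argument, and the only minor point to check is the degenerate case $\lambda(\Omega_i)=0$, handled by the convention above.
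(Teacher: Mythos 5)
Your proof is correct and follows essentially the same route as the paper: triangle inequality through the intermediate measure $\sum_i \frac{\mu(\Omega_i)}{\lambda(\Omega_i)}\chi_{\Omega_i}\lambda$, the elementary inequality $(a+b)^p\le(1+\eps)a^p+C\eps^{-(p-1)}b^p$, and sub-additivity \eqref{eq:sub} to localize the first term to the cells $\Omega_i$. Your explicit verification of the elementary inequality and of the degenerate case $\lambda(\Omega_i)=0$ are harmless additions; nothing is missing.
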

\begin{proof}
 We first use the triangle inequality \eqref{eq:triangle} to get 
 \[
  W^p_{\Omega}\lt(\mu, \frac{\mu(\Omega)}{\lambda(\Omega)}\lambda\rt)\le \lt(W_\Omega\lt(\mu,\sum_i\frac{\mu(\Omega_i)}{\lambda(\Omega_i)}\chi_{\Omega_i}\lambda\rt)+  W_\Omega\lt(\sum_i \frac{\mu(\Omega_i)}{\lambda(\Omega_i)}\chi_{\Omega_i}\lambda,\frac{\mu(\Omega)}{\lambda(\Omega)}\lambda\rt)\rt)^p.
 \]
The proof is then concluded by combining the elementary inequality
\begin{equation}\label{eq:elementary}
 (a+b)^p\le (1+\eps) a^p+ \frac{C}{\eps^{p-1}} b^p \qquad \forall a,b>0 \textrm{ and } \eps\in (0,1),
\end{equation}
with the sub-additivity of $W_\Omega^p$  \eqref{eq:sub} in the form
\[
 W^p_\Omega\lt(\mu,\sum_i\frac{\mu(\Omega_i)}{\lambda(\Omega_i)}\chi_{\Omega_i}\lambda\rt)\le \sum_i W^p_{\Omega_i}\lt(\mu,\frac{\mu(\Omega_i)}{\lambda(\Omega_i)}\lambda\rt).
\]
\end{proof}
\begin{remark}
Alternatively, we could have also stated \eqref{eq:mainsub} in the slightly more symmetric form:
\begin{multline*}
  W^p_{\Omega}\lt(\frac{\mu}{\mu(\Omega)}, \frac{\lambda}{\lambda(\Omega)}\rt)\le (1+\eps)\sum_i \frac{\mu(\Omega_i)}{\mu(\Omega)}W^p_{\Omega_i}\lt(\frac{\mu}{\mu(\Omega_i)},\frac{\lambda}{\lambda(\Omega_i)}\rt)
 \\ + \frac{C}{\eps^{p-1}}W^p_\Omega\lt(\frac{1}{\mu(\Omega)}\sum_i \frac{\mu(\Omega_i)}{\lambda(\Omega_i)}\chi_{\Omega_i}\lambda,\frac{\lambda}{\lambda(\Omega)}\rt).
 \end{multline*}
However, the sub-additivity argument turns out to be a little bit simpler using  \eqref{eq:mainsub} instead. 
\end{remark}
Lemma \ref{lem:main} shows that in order to estimate the defect in sub-additivity, it is enough to bound the local defect of mass distribution. This will be done here through a  PDE argument. 
\begin{definition}\label{def:moderate}
 We say that a rectangle $R= x+\prod_{i=1}^{d}[0,L_i]$ is of moderate aspect ratio if for every $i,j$, $L_i/L_j\le 2$. A partition $\mathcal{R}=\{ R_i\}$  of $R$
 is called admissible if for every $i$, $R_i$ is  a rectangle of moderate aspect ratio and  $3^{-d}|R|\le |R_i|\le |R|$. Notice that in particular  $\#\mathcal{R}\les 1$ for every admissible partition.
\end{definition}

\begin{lemma}\label{lem:CZ}
 Let $R$ be a rectangle of moderate aspect ratio, $\mu$ and $\lambda$ be measures on $R$ with equal mass, both absolutely continuous with respect to Lebesgue and such that $\inf_R \lambda>0$. Then, for every $p\ge 1$
 \begin{equation}\label{eq:estimCZ}
  W_{R}^p(\mu,\lambda)\les \frac{\diam^p(R)}{(\inf_R \lambda)^{p-1}}\int_R |\mu-\lambda|^p.
 \end{equation}

\end{lemma}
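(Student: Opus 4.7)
The plan is to use a Benamou--Brenier-type argument driven by the solution of a Neumann Poisson equation, with a carefully chosen time reparametrization to avoid the blow-up of $\rho_t^{-(p-1)}$ near the endpoints.

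First, since $\mu(R)=\lambda(R)$, I would let $\phi$ be the solution of
\[
-\Delta \phi = \mu-\lambda \quad \text{in } R, \qquad \partial_\nu \phi = 0 \text{ on } \partial R,
\]
normalized by $\int_R \phi = 0$. Using Calder\'on--Zygmund $L^p$-regularity for the Neumann Laplacian on the unit cube together with a scaling argument (and the fact that moderate aspect ratios give a family of rectangles with uniformly bounded elliptic constants), I would obtain
\[
\int_R |\nabla \phi|^p \,dx \,\lesssim\, \diam^p(R)\, \int_R |\mu-\lambda|^p \,dx.
\]
For $p=1$ the same bound follows from a direct Green's function representation.

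Second, I would set up the interpolation. Define $\eta(t) = 1-(1-t)^p$ for $t\in[0,1]$, so that $\eta(0)=0$, $\eta(1)=1$, $\eta'(t)=p(1-t)^{p-1}$ and $1-\eta(t)=(1-t)^p$. Let
\[
\rho_t = (1-\eta(t))\lambda + \eta(t)\mu, \qquad v_t = \frac{\eta'(t)}{\rho_t}\nabla \phi.
\]
A short computation shows $\partial_t \rho_t + \nabla\cdot(\rho_t v_t) = \eta'(t)(\mu-\lambda) + \eta'(t)\Delta\phi = 0$, with homogeneous Neumann data on $\partial R$, so $(\rho_t,v_t)$ is an admissible curve joining $\lambda$ to $\mu$. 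The Benamou--Brenier upper bound then gives
\[
W_R^p(\mu,\lambda) \le \int_0^1 \int_R |v_t|^p \rho_t \,dx\,dt = \int_0^1 \eta'(t)^p \int_R \frac{|\nabla \phi|^p}{\rho_t^{p-1}} \,dx\,dt.
\]

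Third, I would use $\rho_t \ge (1-\eta(t))\inf_R\lambda = (1-t)^p\inf_R\lambda$ to estimate
\[
\frac{\eta'(t)^p}{\rho_t^{p-1}} \le \frac{p^p (1-t)^{p(p-1)}}{(1-t)^{p(p-1)}\,(\inf_R\lambda)^{p-1}} = \frac{p^p}{(\inf_R\lambda)^{p-1}}.
\]
The magic is that the choice $\eta(t)=1-(1-t)^p$ exactly balances the factor $\eta'(t)^p$ against the degeneracy $(1-\eta(t))^{p-1}$, producing a \emph{bounded, in fact constant}, integrand in $t$. Integrating and combining with the elliptic estimate yields
\[
W_R^p(\mu,\lambda) \le \frac{p^p}{(\inf_R\lambda)^{p-1}} \int_R |\nabla\phi|^p \,dx \lesssim \frac{\diam^p(R)}{(\inf_R\lambda)^{p-1}} \int_R |\mu-\lambda|^p.
\]

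\textbf{Main obstacles.} The potentially delicate step is the uniform elliptic $L^p$-estimate on $R$: one needs a Calder\'on--Zygmund-type bound with a constant that depends only on $p$ and $d$, and this is where the ``moderate aspect ratio'' hypothesis on $R$ is essential (otherwise the constant would degenerate). The other point to be careful about is the justification of the Benamou--Brenier upper bound for the non-smooth pair $(\rho_t,v_t)$; this is standard under our assumptions ($\inf_R\lambda>0$ ensures $\rho_t$ is bounded below by a strictly positive density, so $v_t$ is in $L^p(\rho_t\,dx\,dt)$), and follows for instance by mollification.
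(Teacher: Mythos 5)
Your argument is correct, and its first half takes a genuinely different route from the paper. Both proofs reduce \eqref{eq:estimCZ} to the elliptic bound $\int_R|\nabla\phi|^p\lesssim \diam^p(R)\int_R|\mu-\lambda|^p$ for the Neumann potential $\phi$ and then invoke Benamou--Brenier; the difference is how the degeneracy of $\rho_t^{-(p-1)}$ is tamed. The paper first replaces the pair $(\mu,\lambda)$ by $(\mu+\lambda,2\lambda)$, using the triangle inequality \eqref{eq:triangle} and sub-additivity \eqref{eq:sub} to absorb a factor $2^{-1/p}$, so that the linear interpolation is bounded below by $\inf_R\lambda$ uniformly in $t$. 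You keep the original pair and instead reparametrize time, choosing $\eta(t)=1-(1-t)^p$ so that $\eta'(t)^p$ exactly cancels the degeneration $(1-\eta(t))^{p-1}(\inf_R\lambda)^{p-1}$ of $\rho_t^{p-1}$ near $t=1$; the computation checks out (including $p=1$, where $\eta$ is the identity), yields the clean constant $p^p$, and avoids the absorption step, so this part is a nice simplification. What your write-up leaves essentially asserted is exactly the part on which the paper spends most of its proof: the global Calder\'on--Zygmund estimate with Neumann boundary conditions on a rectangle. Since $R$ has corners and edges, the smooth-domain $W^{2,p}$ theory does not apply off the shelf, and the paper explicitly says it found no precise reference; it therefore argues by even reflection across $\partial R$ (turning Neumann into periodic conditions), then uses the Poincar\'e inequality for the mean-zero gradient, interior Calder\'on--Zygmund estimates combined with periodicity and the moderate aspect ratio, and finally Bochner's formula with H\"older to handle the resulting $L^2$ term. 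To make your step self-contained you should either reproduce that reflection argument (this is also where the uniformity in the aspect ratio is justified), or note that the reflected/periodized Green function representation gives the pointwise bound $|\nabla\phi(x)|\lesssim\int_R|x-y|^{1-d}|\mu-\lambda|(y)\,dy$, and this integral operator maps $L^p(R)$ to $L^p(R)$ with norm $\lesssim\diam(R)$ for every $p\ge1$ by Schur's test, which covers your $p=1$ case as well; alternatively, for $p=1$ the whole lemma is immediate from Kantorovich duality, since admissible test functions oscillate by at most $\diam(R)$.
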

\begin{proof}
 Let $\phi$ be a solution of the Poisson equation with  Neumann boundary conditions
 \begin{equation}\label{eq:Poi}
  \Delta \phi= \mu-\lambda \quad \textrm{in } R \qquad \textrm{ and } \qquad \nu\cdot \nabla \phi=0 \quad \textrm{on } \partial R.
 \end{equation}
We first argue that 
\begin{equation}\label{eq:Wpoi}
 W^p_{R}(\mu,\lambda)\les\frac{1}{(\inf_R \lambda)^{p-1}}\int_{R}|\nabla \phi|^p.
\end{equation}
Let us point out that this estimate is well-known 
and has already been used in the context of  the matching problem, see \cite{AmStTr16, GHO} in the case $p=2$ and \cite[Th. 2]{Le17} for general $p\ge 1$. 
Still, we give a proof for the reader's convenience. \\
We first argue as in \cite[Lem. 2.7]{GHO}, and use triangle inequality \eqref{eq:triangle} and the monotonicity of $W_R$  \eqref{eq:sub} to get 
\begin{align*}
 W_{R}(\mu,\lambda)&\le W_R\lt(\mu, \frac{1}{2}(\mu+\lambda)\rt)+ W_R\lt(\frac{1}{2}(\mu+\lambda),\lambda\rt)\\
 &\le W_R\lt(\frac{1}{2}\mu, \frac{1}{2}\lambda\rt)+ W_R\lt(\frac{1}{2}(\mu+\lambda),\lambda\rt)\\
 &=2^{-\frac{1}{p}}\lt( W_R(\mu,\lambda)+ W_R(\mu+\lambda,2\lambda)\rt)
\end{align*}
and thus 
\[
 W_{R}(\mu,\lambda)\les W_R(\mu+\lambda,2\lambda).
\]
We now recall that by the  Benamou-Brenier formula (see \cite[Th. 5.28]{Santam})
\[
  W_{R}^p(\mu+\lambda,2\lambda)=\min_{\rho,j}\lt\{ \int_0^1\int_{R} \frac{1}{\rho^{p-1}}|j|^p \ : \ \partial_t\rho+\nabla\cdot j=0, \rho_0=\mu+\lambda, \ \rho_1=2\lambda\right\}.
\]
Estimate \eqref{eq:Wpoi} follows using  
\[
 \rho_t= (1-t)\mu+t\lambda+\lambda \qquad \textrm{ and } \qquad j=\nabla \phi
\]
as competitor and noticing that for $t\in[0,1]$, $\rho_t\ge  \inf_R \lambda$.\\
We now claim that 
\begin{equation}\label{eq:claimCZ}
 \int_{R}|\nabla \phi|^p\les \diam^p(R)\int_R |\mu-\lambda|^p,
\end{equation}
which together with \eqref{eq:Wpoi} would conclude the proof of \eqref{eq:estimCZ}. Estimate \eqref{eq:claimCZ} is a direct consequence of Poincar\'e inequality and Calder\'on-Zygmund estimates for the Laplacian. 
However, since we did not find a precise reference for (global) Calder\'on-Zygmund estimates  on rectangles with Neumann boundary conditions, we give here a short proof.\\ 
  By scaling, we may assume that $\diam(R)=1$. Furthermore, using even reflections along $\partial R$ we may replace Neumann boundary conditions by periodic ones in \eqref{eq:Poi}. 
  By Poincar\'e inequality   \cite[Prop. 12.29]{leoni} (notice that thanks to the periodic boundary conditions we now have $\int_R \nabla \phi=0$),
  \[
   \int_{R}|\nabla \phi|^p\les \int_R|\nabla^2 \phi|^p. 
  \]
By interior Calder\'on-Zygmund estimates (see for instance \cite[Th. 7.3]{Giaquinta}),
 periodicity and the fact that $R$ has moderate aspect ratio, we get
\[
 \int_R |\nabla^2 \phi|^p\les \int_{R}|\mu-\lambda|^p +\lt(\int_{R} |\nabla^2 \phi|^2\rt)^{\frac{p}{2}}.
\]
By Bochner's formula and H\"older inequality,
\[
 \int_{R} |\nabla^2 \phi|^2=\int_R |\mu-\lambda|^2\les \lt(\int_R |\mu-\lambda|^p\rt)^{\frac{2}{p}},
\]
which concludes the proof of \eqref{eq:claimCZ}.
\end{proof}

\begin{remark}
 For $p=2$, combining the energy identity
 \[
  \int_R |\nabla \phi|^2=\int_{R}\phi(\lambda-\mu)
 \]
with Poincar\'e inequality, we see that  \eqref{eq:claimCZ} (and thus \eqref{eq:estimCZ}) holds for any convex set $R$. Although the situation for $p\ge 2$ is more subtle, see \cite[Prop. 2]{fromm},
we believe that \eqref{eq:claimCZ} holds for any rectangle, not necessarily of moderate aspect ratio. 
\end{remark}

\section{Matching to the reference measure}\label{sec:matching-ref}
In this section, we consider the optimal matching problem between $\mu$ a Poisson point process on $\R^d$ with intensity one and the Lebesgue measure. 
More precisely, for every $L\ge 1$ we let  
\[
 \fref(L)=\EE\lt[ \frac{1}{|Q_L|} W_{Q_L}^p(\mu,\kappa)\rt],
\]
where $Q_L=[0,L]^d$ and 
\[
 \kappa=\frac{\mu(Q_L)}{|Q_L|}\] is the generic constant for which this is well defined.  
 %We notice here that we may also let equivalently $Q_L = [0,L]^d$, since the boundary is Lebesgue negligible, hence the Poisson point process almost surely does give mass to it.
 \begin{theorem}\label{theo:PoiLeb}
  For every $d\ge 3$ and  $p\ge 1$, the limit
  \[
   \frefinf=\lim_{L\to \infty} \fref(L)
  \]
exists and is strictly positive. Moreover, there exists $C>0$ depending on $p$ and $d$ such that for $L\ge 1$, 
\begin{equation}\label{eq:upboundPoiLeb}
 \frefinf\le \fref(L) +\frac{C}{L^{\frac{d-2}{2}}}.
\end{equation}

 \end{theorem}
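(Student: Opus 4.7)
The plan is to derive an approximate sub-additive inequality for $\fref$ under dyadic doubling and then iterate it to extract both the existence and the convergence rate of the limit. A uniform upper bound $\fref(L)\le M$ for all $L\ge 1$, needed for the final $\eps$-optimization, follows directly from \eqref{eq:boundE} together with the triangle inequality against an independent Poisson sample.

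First, I would partition $Q_{2L}$ into its $2^d$ congruent sub-cubes $(Q_{L,i})$ and apply Lemma \ref{lem:main} with $\Omega=Q_{2L}$, $\Omega_i=Q_{L,i}$, $\mu$ the restriction of the Poisson process, and $\lambda$ Lebesgue. Writing $N_i=\mu(Q_{L,i})$, $\kappa_i=N_i/L^d$, and $\kappa_{2L}=\mu(Q_{2L})/(2L)^d$, translation invariance and scaling of the unit-intensity Poisson process identify each $\EE[W^p_{Q_{L,i}}(\mu,\kappa_i)]$ with $\EE[W^p_{Q_L}(\mu,\kappa)]$, so normalizing by $|Q_{2L}|$ yields
\[
 \fref(2L) \le (1+\eps)\,\fref(L) + \frac{C}{\eps^{p-1}\,|Q_{2L}|}\,\EE\lt[W^p_{Q_{2L}}\lt(\sum_i \kappa_i\,\chi_{Q_{L,i}},\, \kappa_{2L}\rt)\rt].
\]

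For the error term, I would work on the high-probability event $\{\kappa_{2L}\ge 1/2\}$ (its complement being exponentially rare by Lemma \ref{lem:concentrationPoi}, and absorbable via the crude bound $W^p_{Q_{2L}}(\mu,\kappa_{2L})\les L^p\mu(Q_{2L})$). On the good event Lemma \ref{lem:CZ} applies and gives
\[
 W^p_{Q_{2L}}\lt(\sum_i \kappa_i \chi_{Q_{L,i}},\,\kappa_{2L}\rt) \les L^p \sum_i L^d\,|\kappa_i-\kappa_{2L}|^p,
\]
and since each $\kappa_i-\kappa_{2L}$ is, up to an overall factor of $(2L)^{-d}$, a centered linear combination of independent Poisson variables of parameter $L^d$, the moment bound \eqref{eq:momentPoi} gives $\EE|\kappa_i-\kappa_{2L}|^p\les L^{-dp/2}$. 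After normalizing by $|Q_{2L}|=2^d L^d$, the error is of order $L^{-p(d-2)/2}$, so
\[
 \fref(2L) \le (1+\eps)\,\fref(L) + \frac{C}{\eps^{p-1}}\,L^{-p(d-2)/2}.
\]

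The endgame is an $\eps$-optimization: using $\fref(L)\le M$ and choosing $\eps\sim M^{-1/p}L^{-(d-2)/2}$ collapses the inequality to $\fref(2L)-\fref(L)\le C L^{-(d-2)/2}$ (for $p=1$ the bound is already in this form, with $\eps=0$). Iterating along $L_k=2^k L_0$ and summing the resulting geometric series, which converges because $(d-2)/2\ge 1/2>0$, shows the dyadic sequence is Cauchy; its limit defines $\frefinf$, and letting $k\to\infty$ in the telescoped bound proves \eqref{eq:upboundPoiLeb} at dyadic scales. Convergence for arbitrary $L\to\infty$ follows by applying the same partition argument with sub-cubes of side close to a fixed large $L_0$ (the leftover boundary layer of volume $\les L_0 L^{d-1}$ contributes an asymptotically vanishing term). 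Positivity $\frefinf>0$ comes from the lower bound in \eqref{eq:boundE} combined with the triangle inequality between two independent Poisson processes $\mu,\mu'$, namely $W_p(\mu,\mu')\le W_p(\mu,\kappa)+W_p(\mu',\kappa)$. The trickiest point of the proof is the exponent bookkeeping: Lemma \ref{lem:CZ} naturally produces $L^{-p(d-2)/2}$ in the error, and only after trading this against $\eps$ through the factor $\eps^{-(p-1)}$ of Lemma \ref{lem:main} does one recover the stated rate $L^{-(d-2)/2}$, uniform in $p\ge 1$.
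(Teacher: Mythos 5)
Your dyadic step reproduces the paper's argument (Lemma \ref{lem:main} plus Lemma \ref{lem:CZ} plus the Poisson moment bound, then the $\eps$-optimization using boundedness of $\fref$), and that part is sound. The genuine gap is the passage from the dyadic relation to the full statement. Iterating $\fref(2L)\le \fref(L)+CL^{-(d-2)/2}$ only shows convergence along each sequence $\{2^kL_0\}_k$; it does not show that different base points $L_0$ yield the same limit, so neither the existence of $\lim_{L\to\infty}\fref(L)$ nor \eqref{eq:upboundPoiLeb} (which needs $\fref(2^kL)\to\frefinf$ for the \emph{given} $L$) follows yet. This is exactly what Step 2 of Proposition \ref{prop:subref} is for: one proves $\fref(mL)\le\fref(L)+CL^{-(d-2)/2}$ for \emph{every} integer $m$, via a hierarchy of admissible partitions into rectangles of moderate aspect ratio and a downward induction, and then combines this with the continuity of $L\mapsto\fref(L)$ (as in Boutet de Monvel--Martin) to identify all the limits.

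Your proposed shortcut --- one application of the partition lemma with sub-cubes of side close to a fixed $L_0$ plus a boundary layer --- does not close this gap. The error term in Lemma \ref{lem:main} is controlled through Lemma \ref{lem:CZ} applied on the \emph{whole} domain $Q_L$, so it carries the factor $\diam^p(Q_L)\sim L^p$, while the per-cube density fluctuations contribute $\EE\lt[|\kappa_i-\kappa|^p\rt]\les L_0^{-pd/2}$; after normalizing by $|Q_L|$ the error is of order $L^pL_0^{-pd/2}$, which diverges as $L\to\infty$ for fixed $L_0$. This is precisely why the defect must be paid scale by scale in a multiscale (dyadic-type) decomposition, where the geometric series in the diameters converges --- and a multiscale decomposition of a cube whose side is not a dyadic multiple of $L_0$ forces the rectangle machinery of Definition \ref{def:moderate}. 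Separately, the claim that the leftover layer is negligible ``because its volume is $\les L_0L^{d-1}$'' is not justified as stated: a slab of thickness $L_0$ spanning $Q_L$ has diameter $\sim L$, and its internal matching cost is not small; one must chop it into pieces of diameter $O(L_0)$ and invoke a uniform bound such as \eqref{eq:boundfR} on $\fref$ over rectangles of moderate aspect ratio, which is itself a by-product of the paper's Step 2.
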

 The proof follows the argument of \cite{BoutMar} (see also \cite{BaBo}) and is mostly based on the following sub-additivity estimate.
 \begin{proposition}\label{prop:subref}
  For every $d\ge 3$ and $p\ge 1$, there exists a constant $C>0$ such that for every $L\ge 1$ and $m\in \N$,
  \begin{equation}\label{subad}
 \fref(mL)\le \fref(L)+  \frac{C}{L^{\frac{d-2}{2}}}.
\end{equation}
 \end{proposition}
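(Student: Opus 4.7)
The plan is to combine Lemma \ref{lem:main} with Lemma \ref{lem:CZ} via an iterated dyadic partition argument. First I reduce to the case $m=2^k$ (for a general $m$, the partition of $Q_{mL}$ into $m^d$ sub-cubes of side $L$ can be regrouped into dyadic blocks, or absorbed into the constant $C$ via a separate monotonicity/triangle inequality step). Set $L_j = 2^j L$ for $j=0,1,\dots,k$, so that $Q_{mL} = Q_{L_k}$, and denote by $\{Q_{L_j}^i\}$ the dyadic partition of $Q_{mL}$ at scale $L_j$. Writing $\kappa^i_{L_j} = \mu(Q_{L_j}^i)/L_j^d$ for the local densities of the Poisson process, I will estimate $W^p_{Q_{L_k}}(\mu,\kappa_{L_k})$ by applying Lemma \ref{lem:main} on every cube, at every scale, from the coarsest ($j=k$) down to the finest ($j=0$).

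More concretely, at scale $j$ I apply Lemma \ref{lem:main} on each cube $Q_{L_j}^i$, with $\mu$ the Poisson process, $\lambda$ the Lebesgue measure, and with the $2^d$ dyadic children as the partition, using a weight $\eps_j = \delta\, 2^{-\alpha j}$ for some small $\delta\in(0,1)$ and a fixed $\alpha>0$ to be chosen. Iterating this yields
\[
 W^p_{Q_{mL}}(\mu, \kappa_{mL}) \le \Big(\prod_{j=1}^k (1+\eps_j)\Big) \sum_{i} W^p_{Q_L^i}(\mu,\kappa^i_L) + C \sum_{j=1}^k \eps_j^{-(p-1)} \prod_{\ell>j}(1+\eps_\ell)\, E_j,
\]
where $E_j$ is the total error at scale $j$, i.e.\ the sum over all cubes at scale $j$ of the transport costs between the piecewise constant densities $\sum_\ell \kappa^{i,\ell}_{L_{j-1}} \chi_{Q^{i,\ell}_{L_{j-1}}}$ and $\kappa^i_{L_j}\chi_{Q^i_{L_j}}$. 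Since $\sum_j \eps_j \le C\delta$, the multiplicative factors $\prod(1+\eps_j)$ are all bounded by $1 + C\delta$. Taking expectation and using stationarity of the Poisson point process, the main term contributes $(1+C\delta)\,|Q_{mL}|\,\fref(L)$.

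For each error $E_j$, on each cube $Q_{L_j}^i$ I invoke Lemma \ref{lem:CZ} (the cubes are of moderate aspect ratio with diameter $\sim L_j$), then take expectation and use the Poisson moment bound \eqref{eq:momentPoi} from Lemma \ref{lem:concentrationPoi}, which gives $\EE|\kappa^{i,\ell}_{L_{j-1}} - \kappa^i_{L_j}|^p \les L_{j-1}^{-dp/2}$. A direct computation (there are $\sim (m/2^j)^d$ cubes at scale $j$, each contributing $\sim L_j^p \cdot L_{j-1}^d \cdot L_{j-1}^{-dp/2}$) yields
\[ \EE[E_j] \les m^d L^{p+d-dp/2} \cdot 2^{jp(1-d/2)+dp/2}. \]
Since $d \ge 3$, the exponent $p(1-d/2)<0$, so the geometric series in $j$ converges provided $\alpha$ is chosen so that $\alpha(p-1)+p(1-d/2)<0$ (which is possible for any $p\ge 1$; the case $p=1$ is trivial). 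After dividing by $|Q_{mL}|=m^dL^d$ we obtain
\[ \fref(mL) \le (1+C\delta)\fref(L) + C\,\delta^{-(p-1)} L^{-p(d-2)/2}. \]
Optimizing in $\delta$ (choice $\delta \sim L^{-(d-2)/2}$) and using the a priori bound $\fref(L)\les 1$ (coming from \eqref{eq:boundE}) gives \eqref{subad}.

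The main technical obstacle is that Lemma \ref{lem:CZ} requires $\inf_R\lambda>0$, whereas the random constant $\kappa^i_{L_j}$ can vanish for individual realizations of the Poisson process. I would circumvent this by conditioning on the ``good'' event that $\kappa^i_{L_j}\ge 1/2$ uniformly over all cubes and scales; the complementary event has probability exponentially small in $L^d$ by the Cram\'er--Chernoff bound \eqref{eq:Cramer}, and a trivial deterministic bound $W^p_{Q_{mL}}(\mu,\kappa_{mL})\les (mL)^p \mu(Q_{mL})$ suffices to make its contribution negligible when $L\gg 1$. The regime $L\sim 1$ is then handled separately by adjusting the constant $C$, using that $\fref$ is uniformly bounded. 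A mild additional bookkeeping step is required to deal with general $m\in\N$ (not a power of $2$), but this does not introduce new ideas.
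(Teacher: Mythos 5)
Your core dyadic argument is sound and is in essence the paper's own: apply Lemma \ref{lem:main} at each scale, control each error term via Lemma \ref{lem:CZ} together with the Poisson moment bound \eqref{eq:momentPoi}, use that the per-unit-volume errors decay geometrically across scales because $d\ge 3$, then optimize the weight ($\delta\sim L^{-(d-2)/2}$) and invoke the a priori bound $\fref(L)\les 1$. Whether one optimizes a single $\delta$ at the end after iterating on the level of the random costs (your version) or optimizes $\eps$ step by step on the level of $\fref$ and then sums the resulting geometric series (the paper's Step 1) is immaterial; your computation of $\EE[E_j]$ and of the final error $\delta^{-(p-1)}L^{-p(d-2)/2}$ is correct.

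There are, however, two genuine gaps. First, the reduction to $m=2^k$ is not ``mild bookkeeping'', and neither of your suggested shortcuts works as stated: there is no monotonicity of $\fref$ in $L$ to invoke, and for non-dyadic $m$ you cannot regroup the $m^d$ cubes of side $L$ into dyadic blocks of comparable size at every level of the hierarchy. If instead you accept, in a single application of Lemma \ref{lem:main}, pieces of very different sizes, the error from Lemma \ref{lem:CZ} couples $\diam(Q_{mL})\sim mL$ with the fluctuations of the smallest pieces: a boundary layer of $\sim m^{d-1}$ cubes of side $L$ already contributes, after dividing by $|Q_{mL}|$, a term $\ges m^{p-1}L^{-p(d-2)/2}$, which is not uniform in $m$. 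Since the statement (and its use in proving existence of the limit, via continuity in $L$) requires all $m\in\N$, this step cannot be waved away; the paper's fix is exactly to run your hierarchical scheme with near-halving, i.e.\ admissible partitions of $Q_{mL}$ into rectangles of moderate aspect ratio with sides integer multiples of $L$ (Definition \ref{def:moderate}, which is why Lemma \ref{lem:CZ} is stated for such rectangles), combined with a downward induction with scale-dependent $\eps$. Your argument does generalize along these lines, but this is precisely the nontrivial part you have omitted. Second, handling the degenerate event by a single global union bound fails uniformly in $m$: the union runs over $\sim m^d$ cubes, so its probability is only $\les m^d\exp(-cL^d)$, and paired with the global trivial bound $(mL)^p\mu(Q_{mL})$ this does not give a contribution $\les L^{-(d-2)/2}$ for fixed $L$ and large $m$. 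The standard remedy, which the paper uses within each single-scale step, is to split good/bad locally inside the expectation of each error term, where the per-cube trivial bound $\les L_j^p\,\mu(Q^i_{L_j})$ together with $\PP[\kappa^i_{L_j}<\tfrac12]\le \exp(-cL_j^d)$ yields a summable, negligible contribution.
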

\begin{proof}
We start by pointing out that since $\fref(L)\les L^p$, it is not restrictive to assume that $L\gg1$ in the proof of \eqref{subad}.\\

 \textit{Step 1. }[The dyadic case] For the  sake of clarity, we start with the simpler case $m=2^k$ for some $k\ge 1$. We claim that 
 \begin{equation}\label{dyadic}
  \fref(2L)\le \fref(L)+ \frac{C}{L^{\frac{d-2}{2}}}.
 \end{equation}
 The desired estimate \eqref{subad} would then follow iterating \eqref{dyadic} and using that $\sum_{k\ge 0} \frac{1}{2^{k(d-2)}}<\infty$ for $d\ge 3$. In order to prove \eqref{dyadic}, we 
 divide the cube $Q_{2L}$ in $2^d$ sub-cubes $Q_i=x_i+ Q_L$ and let $\kappa_i=\frac{\mu(Q_i)}{|Q_L|}$ (and $\kappa=\frac{\mu(Q_{2L})}{|Q_{2L}|}$). Notice that we are considering a partition up to a Lebesgue negligible remainder, which gives no contribution almost surely. By \eqref{eq:mainsub}, for every $\eps\in(0,1)$,
 \[
  W^p_{Q_{2L}}(\mu,\kappa)\le (1+\eps)\sum_i W_{Q_i}^p(\mu,\kappa_i) +\frac{C}{\eps^{p-1}}W^p_{Q_{2L}}\lt(\sum_i \kappa_i \chi_{Q_i},\kappa\rt).
 \]
Dividing by $|Q_{2L}|$, taking expectations and using the fact that by translation invariance $\EE[\frac{1}{|Q_L|}W_{Q_i}^p(\mu,\kappa_i)]=\fref(L)$, we get 
\begin{align*}
 \fref(2L)&\le (1+\eps) \sum_i \frac{|Q_L|}{|Q_{2L}|}\fref(L)+\frac{C}{\eps^{p-1}}\EE\lt[\frac{1}{|Q_{2L}|}W^p_{Q_{2L}}\lt(\sum_i \kappa_i \chi_{Q_i},\kappa\rt)\rt]\\
 &=(1+\eps) \fref(L)+\frac{C}{\eps^{p-1}}\EE\lt[\frac{1}{|Q_{2L}|}W^p_{Q_{2L}}\lt(\sum_i \kappa_i \chi_{Q_i},\kappa\rt)\rt].
\end{align*}
We now estimate $\EE[\frac{1}{|Q_{2L}|}W^p_{Q_{2L}}(\sum_{i} \kappa_i \chi_{Q_i},\kappa)]$. Using
$\frac{1}{|Q_{2L}|}W^p_{Q_{2L}}(\sum_{i} \kappa_i \chi_{Q_i},\kappa)\les L^p \kappa$ together with 
\[
 \PP\lt[\kappa\le \frac{1}{2}\rt]\le \exp(-cL^d),
\]
which follows from the  Cram\'er-Chernoff bounds \eqref{eq:Cramer},  we may reduce ourselves to the event $\{\kappa\ge \frac{1}{2}\}$. 
Under this condition, by \eqref{eq:estimCZ}, we have 
\begin{align*}
 \frac{1}{|Q_{2L}|}W^p_{Q_{2L}}\lt(\sum_{i} \kappa_i \chi_{Q_i},\kappa\rt)&\les \frac{L^p}{L^d}\int_{Q_{2L}}\sum_i |\kappa_i-\kappa|^p \chi_{Q_i}\\
 &\les L^{p}\lt( |\kappa-1|^p+\sum_i|\kappa_i-1|^p\rt).
\end{align*}
Recalling that $\mu(Q_i)$ are Poisson random variables of parameter $|Q_i|$ and that $\kappa_i=\frac{\mu(Q_i)}{|Q_i|}$, we get from \eqref{eq:momentPoi}
\[
 \EE\lt[|\kappa-1|^p\rt]\sim\EE\lt[|\kappa_i-1|^p\rt]\les \frac{1}{L^{\frac{pd}{2}}}.
\]
Thus 
\begin{equation}\label{eq:claimsubref}
 \EE\lt[\frac{1}{|Q_{2L}|}W^p_{Q_{2L}}\lt(\sum_{i} \kappa_i \chi_{Q_i},\kappa\rt)\rt] \les \frac{1}{ L^{\frac{p}{2}(d-2)}}
\end{equation}
and  we conclude that for $\eps\in(0,1)$,
\begin{equation}\label{eq:dyadicnonopt}
 \fref(2L)\le (1+\eps)\fref(L) +\frac{C}{\eps^{p-1}}\frac{1}{ L^{\frac{p}{2}(d-2)}}.
\end{equation}
Optimizing in $\eps$ by choosing $\eps=L^{-(d-2)/2}$, and using that $\fref(L)$ is bounded 
(by \eqref{eq:boundE} and  \eqref{eq:Cramer}, see for instance \cite[Prop. 2.7]{GHO2} for details) we conclude the proof of \eqref{dyadic}.
Let us point out that we used here boundedness of $\fref(L)$ for simplicity but that as shown below it can also be obtained as a consequence of our proof.\\

\medskip
\textit{Step 2.}[The general case] We now consider the case when $m\in \N$ is not necessarily dyadic. 
We will partition $Q_{mL}$ into rectangles of almost dyadic size and thus need to deal with slightly more general configurations than dyadic cubes.
Let us introduce some notation.  Let $R$ be a rectangle with moderate aspect ratio and $\mathcal{R}=\{R_i\}$ be an admissible partition of $R$ (recall Definition \ref{def:moderate}).  
 Slightly abusing notation, we define 
\begin{equation}\label{def:frefR}
 \fref(R)= \EE\lt[\frac{1}{|R|}W^p_R(\mu,\kappa)\rt].
\end{equation}
\textit{Step 2.1.} We claim that the following variant of \eqref{eq:dyadicnonopt} holds: for every rectangle $R$ of moderate aspect ratio with $|R|\gg1$, every admissible partition $\mathcal{R}$ 
of $R$ and every $\eps\in(0,1)$, we have 
 \begin{equation}\label{onestep}
  \fref(R)\le (1+\eps)\sum_i \frac{|R_i|}{|R|} \fref(R_i) +\frac{C}{\eps^{p-1}} \frac{1}{|R|^{\frac{p(d-2)}{2d}}}.
 \end{equation}
Defining  $\kappa_i=\frac{\mu(R_i)}{|R_i|}$ and using \eqref{eq:mainsub} as above, we get
\[
  \fref(R)\le (1+\eps)\sum_i \frac{|R_i|}{|R|} \fref(R_i) +\frac{C}{\eps^{p-1}} \EE\lt[\frac{1}{|R|}W_{R}^p\lt(\sum_i \kappa_i \chi_{R_i},\kappa\rt)\rt].
\]
The estimate 
\begin{equation}\label{eq:claimsubrefR}
 \EE\lt[\frac{1}{|R|}W_{R}^p\lt(\sum_i \kappa_i \chi_{R_i},\kappa\rt)\rt]\les \frac{1}{|R|^{\frac{p(d-2)}{2d}}}
\end{equation}
is then obtained arguing exactly as for \eqref{eq:claimsubref}, using first the Cr\'amer-Chernoff bound \eqref{eq:Cramer} to reduce to the event $\{\kappa\ge \frac{1}{2}\}$
and then \eqref{eq:estimCZ} (recalling that $\diam(R)\sim|R|^{\frac{1}{d}}$ since $R$ has moderate aspect ratio) in combination with \eqref{eq:momentPoi} and the fact that
$\#\mathcal{R}\les 1$ since $\mathcal{R}$ is an admissible partition. \\

\smallskip
\textit{Step 2.2.} Starting from the cube $Q_{mL}$, let us construct a sequence of finer and finer partitions of $Q_{mL}$ by rectangles of moderate aspect ratios and 
side-length given by integer multiples of $L$.
 We let $\mathcal{R}_0=\{Q_{mL}\}$ and define $\mathcal{R}_k$ inductively as follows.
 Let    $R\in \mathcal{R}_k$. Up to translation we may assume that $R=\prod_{i=1}^d (0, m_i L)$ for some $m_i\in \N$. We then split each interval $(0,m_i L)$ into $(0,\lfloor\frac{m_i}{2}\rfloor L)\cup(\lfloor\frac{m_i}{2}\rfloor L, m_i L)$. 
 It is readily seen that this induces an admissible partition of $R$. Let us point out that when $m_i=1$ for some $i$, the corresponding interval   $(0,\lfloor\frac{m_i}{2}\rfloor L)$ is empty.
 This procedure stops after a finite number of steps $K$ once $\mathcal{R}_K=\{Q_L+z_i, z_i\in [0,m-1]^d\}$. It is also readily seen that $2^{K-1}<m\le 2^K$ and that for every $k\in [0,K]$ and every $R\in \mathcal{R}_k$ we have $|R|\sim (2^{K-k} L)^d$. \\
Let us prove by  a downward induction that there exists $\Lambda>0$ such that  for every $k\in [0,K]$ and every $R\in \mathcal{R}_{k}$,
\begin{equation}\label{induction}
 \fref(R)\le \fref(Q_L)+ \Lambda(1+\fref(Q_L))L^{-\frac{d-2}{2}} \sum_{j=K-k}^K 2^{- j\frac{d-2}{2}}.
\end{equation}
This is clearly true for $k=K$. Assume that it holds true for $k+1$. Let $R\in \mathcal{R}_{k}$. Applying \eqref{onestep} with $\eps= (2^{K-k} L)^{-(d-2)/2}\ll1$, we get 
\begin{align*}
 \fref(R)&\le (1+ \eps) \sum_{R_i\in \mathcal{R}_{k+1}, R_i\subset R} \frac{|R_i|}{|R|} \fref(R_i) + \frac{C}{\eps} \frac{1}{|R|^{\frac{p(d-2)}{2d}}}\\
 &\stackrel{\eqref{induction}}{\le} (1+\eps) \lt(\fref(Q_L)+ \Lambda(1+\fref(Q_L))L^{-\frac{d-2}{2}} \sum_{j=K-k+1}^K 2^{- j\frac{d-2}{2}}\rt) +  C (2^{K-k} L)^{-\frac{d-2}{2}}\\
 &\le  \fref(Q_L)+  \Lambda(1+\fref(Q_L))L^{-\frac{d-2}{2}}\\
 &\times\lt[\sum_{j=K-k+1}^K 2^{- j\frac{d-2}{2}}+2^{-(K-k)\frac{d-2}{2}}\lt( \frac{C}{\Lambda}+L^{-\frac{d-2}{2}} \sum_{j=K-k+1}^K 2^{- j\frac{d-2}{2}}  \rt)\rt].
\end{align*}
If $L$ is large enough, then $(\sum_{j=K-k+1}^K 2^{- j\frac{d-2}{2}}) L^{-(d-2)/2}\le \frac{1}{2}$. Finally, choosing $\Lambda\ge 2C$ yields \eqref{induction}.\\
Applying \eqref{induction} to $R=Q_{mL}$ and using that $\sum_{j\ge 0} 2^{- j\frac{d-2}{2}}<\infty$, we get 
\[
 \fref(mL)\le \fref(L)+ C(1+\fref(L)) \frac{1}{L^{\frac{d-2}{2}}}.
\]
Since $\fref(L)\les L^p$, writing that every $L\gg1$ may be written as $L=mL'$ for some $m\in \N$ and $L'\in[1,2]$, we conclude that $\fref(L)$ is bounded and thus \eqref{subad} follows.
\end{proof}
\begin{remark}
 We point out that as a consequence of the proof of Proposition \ref{prop:subref} we have for every rectangle $R$ of moderate aspect ratio (recall definition \eqref{def:frefR})
 \begin{equation}\label{eq:boundfR}
  \fref(R)\les 1.
 \end{equation}

\end{remark}

We can now prove Theorem \ref{theo:PoiLeb}
 \begin{proof}[Proof of Theorem \ref{theo:PoiLeb}]
  The existence of a limit $\frefinf$ is obtained from \eqref{subad}
  arguing exactly as in \cite{BoutMar} using the continuity of $L\mapsto \fref(L)$ (which can be obtained for instance by dominated convergence). 
  The fact that $\fref(L)\ges 1$ and thus $\frefinf>0$ follows from \eqref{eq:boundE} and \eqref{eq:Cramer}. Finally, \eqref{eq:upboundPoiLeb} follows from \eqref{subad} by sending $m\to \infty$ for fixed $L$.
 
 \end{proof}
 \begin{remark}
  It may be conjectured from \cite{CaLuPaSi14} that 
  \[
   |\fref(L)-\frefinf|\les \frac{1}{L^{d-2}}.
  \]
Let us notice that if we could replace  the term $\eps^{-(p-1)}$ in \eqref{eq:dyadicnonopt} by a constant then letting $\eps\to 0$   we would get the lower bound
\[
 \fref(L)-\frefinf\ges -\frac{1}{L^{\frac{p(d-2)}{2}}},
\]
which, at least for $p=2$, is in line with the conjectured rate. See also Remark~\ref{rem:rates} for   rates in the case of a deterministic number of points.
 \end{remark}

% 
% 
% This together with continuity of $f$ would give the claim (see \cite[Th. 15]{BaBo}).\\
% \begin{lemma}
%  The function $f$ is continuous in $[1,\infty)$.
% \end{lemma}
% \begin{proof}
% The continuity of $f$ may be obtained in many ways.
% An easy argument is that as $L'\to L$, 
% \[
%  \PP[\mu(Q_{L'})\neq \mu(Q_L)]=1-\exp(- |L^d-(L')^d|)\les |L-L'| L^{d-1}+ O(|L-L'|^2 L^{2(d-1)})
% \]
% and that if $\mu(Q_{L'})=\mu(Q_L)$, then a simple rescaling argument gives 
% \[
%  \frac{1}{L^d} W_{Q_L}^2(\mu,\kappa_L)=\lt(\frac{L'}{L}\rt)^{d-2} \frac{1}{(L')^d} W_{Q_{L'}}^2(\mu,\kappa_{L'}).
% \]
% Since $\frac{1}{L^d} W_{Q_L}^2(\mu,\kappa_L)\les L^2$, we obtain for $L'\to L$,
% \[
%  f(L)\le \lt(\frac{L'}{L}\rt)^{d-2} f(L')+ C |L'-L| L^{d+1}+O(|L-L'|^2 L^{2d}).
% \]
% Since the roles of $L$ and $L'$ may be interchanged, this gives the claim.
% \end{proof}

\section{Bi-partite matching}\label{sec:bipartite}
We now turn to the bi-partite matching. For $\mu$ and $\lambda$ two independent Poisson point processes of intensity one, we want to study the asymptotic behavior as $L\to \infty$ of 
\[
 \fbi(L)=\EE\lt[\frac{1}{|Q_L|}W_{Q_L}^p(\mu, \kappa\lambda)\rt],
\]
with $\kappa=\frac{\mu(Q_L)}{\lambda(Q_L)}$. Analogously to Theorem \ref{theo:PoiLeb}, we have:
\begin{theorem}\label{theo:bi}
  For every $d\ge 3$ and  $p\ge 1$, the limit
  \[
   \fbiinf=\lim_{L\to \infty} \fbi(L)
  \]
exists and is strictly positive. Moreover, there exists $C>0$ depending on $p$ and $d$ such that for $L\ge 1$, 
\begin{equation*}\label{eq:upboundbi}
 \fbiinf\le \fbi(L) +\frac{C}{L^{\frac{d-2}{2p}}}.
\end{equation*}
\end{theorem}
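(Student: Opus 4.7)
The proof closely parallels that of Theorem \ref{theo:PoiLeb}. The central ingredient is a subadditivity estimate
\[
\fbi(mL)\le \fbi(L)+\frac{C}{L^{(d-2)/(2p)}}
\]
for every $m\in \N$ and $L\gg 1$. Once this is in hand, the existence of the limit $\fbiinf$, the claimed upper bound, and strict positivity all follow from the continuity-and-limit arguments used at the end of Section~\ref{sec:matching-ref}: continuity of $L\mapsto \fbi(L)$ via dominated convergence, and positivity from \eqref{eq:boundE} together with \eqref{eq:Cramer}.

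I would apply Lemma \ref{lem:main} with $\Omega=Q_{mL}$, the partition of $Q_{mL}$ into sub-cubes $\{Q_i\}$ of side $L$, and the two independent Poisson processes $\mu,\lambda$. Setting $\kappa_i=\mu(Q_i)/\lambda(Q_i)$ and $\kappa=\mu(Q_{mL})/\lambda(Q_{mL})$, taking expectations, and exploiting translation invariance, the main term yields $(1+\eps)\fbi(L)$. The core of the proof is then to establish the error estimate
\[
E:=\frac{1}{|Q_{mL}|}\EE\lt[W^p_{Q_{mL}}\lt(\sum_i \kappa_i \chi_{Q_i}\lambda,\,\kappa\lambda\rt)\rt]\les \frac{1}{L^{(d-2)/2}},
\]
which, upon optimization in $\eps$ inside \eqref{eq:mainsub}, produces the claimed rate $L^{-(d-2)/(2p)}$. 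This is a factor $p$ worse in the exponent than what we obtained in the reference case, consistent with the extra fluctuations of $\lambda$ in the bipartite problem.

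The main obstacle, as compared to Proposition \ref{prop:subref}, is that both measures in the error term are weightings of the atomic measure $\lambda$, so Lemma \ref{lem:CZ} cannot be applied directly. My plan is to interpose the piecewise-constant intermediate measures
\[
\bar A:=\sum_i \frac{\mu(Q_i)}{|Q_i|}\chi_{Q_i}\,dx,\qquad \bar B:=\kappa\sum_i \frac{\lambda(Q_i)}{|Q_i|}\chi_{Q_i}\,dx,
\]
chosen so that their cell-wise masses agree with those of $A:=\sum_i\kappa_i\chi_{Q_i}\lambda$ and $B:=\kappa\lambda$ respectively. The central piece $W^p_{Q_{mL}}(\bar A,\bar B)$ then falls under Lemma \ref{lem:CZ} and, after a Cram\'er--Chernoff reduction to the high-probability event $\inf\bar B\ges 1$, contributes at most $\les L^{-p(d-2)/2}$ as in Step~1 of Proposition~\ref{prop:subref}. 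The delicate point is controlling the remaining regularization pieces $W^p_{Q_{mL}}(A,\bar A)$ and $W^p_{Q_{mL}}(\bar B,B)$: a direct subadditive bound writes each of these as a sum of Poisson-to-Lebesgue costs of the form $\kappa_i W^p_{Q_i}(\lambda,\bar\lambda_i dx)$ (resp. $\kappa W^p_{Q_i}(\lambda,\bar\lambda_i dx)$), whose expectation is of order $|Q_{mL}|\fref(L)\sim |Q_{mL}|$ and therefore gives only the useless bound $E\les 1$. To avoid this loss I would build the coupling between $A$ and $B$ using a \emph{common} cell-wise optimal transport $T=\sqcup_i T_i$ from $\bar\lambda_i\,dx$ to $\lambda\restr Q_i$, noting that $T_\#\bar A=A$ and $T_\#\bar B=B$, and push forward an optimal plan for $(\bar A,\bar B)$. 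A careful bookkeeping, exploiting independence of $\mu$ and $\lambda$ and the concentration $|\kappa_i-\kappa|\les L^{-d/2}$ to extract cancellation between the source and target displacement contributions of $T$, should then reduce the combined regularization cost to a quantity controlled by $\sum_i|\kappa_i-\kappa|^p W^p_{Q_i}(\lambda,\bar\lambda_i dx)$, whose expectation decays at the required rate $L^{-(d-2)/2}$. This is the principal technical obstacle of the argument.

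Once the one-step (say $m=2$) subadditivity is established, the extension to general $m$ and the iteration through rectangles of moderate aspect ratio proceed verbatim as in Step~2 of Proposition \ref{prop:subref}, by a downward induction on a dyadic tree of admissible partitions. The final inequality $\fbiinf\le \fbi(L)+CL^{-(d-2)/(2p)}$ is then obtained by sending $m\to\infty$ for fixed $L$.
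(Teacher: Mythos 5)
Your overall architecture is the right one and matches the paper: apply Lemma \ref{lem:main}, prove that the expected sub-additivity defect is $\les L^{-(d-2)/2}$ (the paper's Proposition \ref{prop:subbi}, stated for admissible rectangles as $\les |R|^{-\frac{d-2}{2d}}$), optimize in $\eps$ to get the exponent $\frac{d-2}{2p}$, iterate over dyadic admissible partitions, and conclude as for Theorem \ref{theo:PoiLeb}. You also correctly identify the key obstruction (both measures are weightings of the atomic $\lambda$, and full regularization costs $O(1)$ per unit volume). However, the mechanism you propose to overcome it has a genuine gap. If you push forward an \emph{optimal} plan for $(\bar A,\bar B)$ by $T\times T$, the bound $|T(x)-T(y)|^p\les |T(x)-x|^p+|x-y|^p+|T(y)-y|^p$ integrates the displacement terms against the \emph{full} mass $\bar A$, $\bar B$, giving back exactly the $O(1)$ loss you set out to avoid. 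If instead you choose a plan whose diagonal part carries the common mass $\min(\kappa_i,\kappa)$ cell by cell (so that the displacement terms only see the moved mass, of order $|\kappa_i-\kappa|\sim L^{-d/2}$ per unit volume), then the middle term is no longer $W^p(\bar A,\bar B)$ but the transport cost between the excess and deficit measures; these have total mass $\sim L^{d/2}$ and must travel distance $\sim L$, and by H\"older ($W_1\le \mathrm{mass}^{1-1/p}\,W_p$) any such coupling costs $\ges L^{p-d/2}$ per unit volume, which exceeds the target $L^{-(d-2)/2}$ for every $p>1$. So the ``common map $T$ plus move only the excess'' strategy cannot reach the claimed rate for $p>1$, and the asserted reduction to $\sum_i|\kappa_i-\kappa|^p W^p_{Q_i}(\lambda,\bar\lambda_i\,dx)$ is not substantiated (the natural bookkeeping in fact produces the first power $|\kappa_i-\kappa|$, and in any case it does not account for the cross-cell cost).

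The paper's resolution is different and is the essential point of Proposition \ref{prop:subbi}: one regularizes only a small \emph{deterministic} fraction $\theta$ of $\lambda$ in each cell, replacing $\theta\lambda\restr R_i$ by $\theta\frac{\lambda(R_i)}{|R_i|}dx$, transfers the imbalance $\kappa-\kappa_i$ entirely within this Lebesgue layer using Lemma \ref{lem:CZ} (whose density lower bound is then $\sim\theta$, not $\sim L^{-d/2}$), and de-regularizes; the regularization steps cost $\theta$ per unit volume by \eqref{eq:boundfR}, the middle step costs $\theta^{-(p-1)}|R|^{-\frac{p(d-2)}{2d}}$ by \eqref{eq:momentPoi}, and optimizing $\theta=|R|^{-\frac{d-2}{2d}}$ gives \eqref{eq:claimsubbi}. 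Note that for $p>1$ an intermediate amount of participating mass, strictly between the excess $\sim L^{-d/2}$ and the full mass $O(1)$, is genuinely needed, which your construction does not provide; moreover, taking $\theta$ deterministic is what makes the expectations trivial, whereas your quantity couples $|\kappa_i-\kappa|$ with $W^p_{Q_i}(\lambda,\bar\lambda_i\,dx)$ (both depend on $\lambda\restr Q_i$), so its expectation would require independence/second-moment arguments not available in the paper. A further minor caveat: the displayed one-step estimate for $Q_{mL}$ partitioned directly into side-$L$ cubes cannot hold uniformly in $m$ (the defect grows with $\diam(Q_{mL})$); this is precisely why the dyadic iteration through admissible partitions, which you do invoke afterwards, is not optional.
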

The proof of Theorem \ref{theo:bi} follows the same line of arguments as for Theorem \ref{theo:PoiLeb}. We only detail the estimate of the sub-additivity defect i.e. the counterpart of \eqref{eq:claimsubrefR}, since it is more delicate in the bipartite case. 
%Notice that as oppposed to \eqref{eq:claimsubrefR}, the estimate \eqref{eq:claimsubbi} below 
\begin{proposition}\label{prop:subbi}
 Let $R$ be a rectangle of moderate aspect ratio with $|R|\gg1$ and $\mathcal{R}=\{R_i\}$ be an admissible partition of $R$ (recall Definition \ref{def:moderate}).
Defining $\kappa_i= \frac{\mu(R_i)}{\lambda(R_i)}$ and $\kappa=\frac{\mu(R)}{\lambda(R)}$, we have for every $d\ge 3$ and $p\ge 1$,
\begin{equation}\label{eq:claimsubbi}
 \EE\lt[\frac{1}{|R|} W_{{R}}^p\lt(\sum_i \kappa_i \chi_{R_i} \lambda, \kappa \lambda\rt)\rt]\les \frac{1}{|R|^{\frac{d-2}{2d}}}.
\end{equation}
% and let For every $d\ge 3$ and $p\ge 1$, there exists a constant $C>0$ such that for every $L\ge 1$ and $m\in \N$,
%   \begin{equation}\label{eq:subadbi}
%  \fbi(mL)\le \fbi(L)+  \frac{C}{L^{\frac{d-2}{2p}}}.
% \end{equation}
\end{proposition}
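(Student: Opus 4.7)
The plan is to follow the scheme of \eqref{eq:claimsubrefR} in Proposition \ref{prop:subref}: isolate a good event, bound the main piece via a Calder\'on–Zygmund estimate (Lemma \ref{lem:CZ}), and handle the complementary event by the crude deterministic bound. The essential new difficulty, as highlighted in the introduction, is that both $\alpha := \sum_i \kappa_i \chi_{R_i}\lambda$ and $\beta := \kappa\lambda$ are supported on the atoms of $\lambda$, so Lemma \ref{lem:CZ} cannot be applied directly to the pair $(\alpha,\beta)$.

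First I would restrict to the good event $G := \{1/2 \le \kappa \le 2\} \cap \bigcap_i\{1/2 \le \lambda(R_i)/|R_i| \le 2\}$. Since $|R_i|\sim|R|$ and $\#\mathcal{R}\les 1$, the Cram\'er–Chernoff bound \eqref{eq:Cramer} gives $\PP(G^c)\le \exp(-c|R|)$. On $G^c$, the crude estimate $\frac{1}{|R|}W_R^p(\alpha,\beta)\le \diam(R)^p\mu(R)/|R|\les |R|^{p/d}\kappa$ together with \eqref{eq:momentPoi} shows that the bad event contributes negligibly. On $G$, the key moment control from \eqref{eq:momentPoi} yields $\EE[|\kappa_i-\kappa|^p\chi_G]\les |R|^{-p/2}$.

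The core idea is to \emph{regularize} $\alpha$ and $\beta$ through a common intermediate transport. Let $T$ be a $W_p$-optimal coupling between $\lambda\restr R$ and its average $\frac{\lambda(R)}{|R|}\chi_R$, whose cost is $W_R^p(\lambda,\frac{\lambda(R)}{|R|})\les |R|$ in expectation by \eqref{eq:boundfR}. Pushing forward, $T_\#\alpha$ and $T_\#\beta$ become absolutely continuous measures, piecewise constant on a (random) partition $\{A_i\}_i$ of $R$ where $A_i$ is the union of the Laguerre cells of the atoms of $\lambda$ lying in $R_i$, with $|A_i|\sim|R_i|$. On $G$, the density of $T_\#\beta$ is comparable to $1$, so Lemma \ref{lem:CZ} applied to $(T_\#\alpha,T_\#\beta)$ gives
\[
W_R^p(T_\#\alpha, T_\#\beta)\les |R|^{p/d}\sum_i |R_i|\, |\kappa_i-\kappa|^p,
\]
whose expectation on $G$ is $\les |R|^{1-p(d-2)/(2d)}\le |R|^{1-(d-2)/(2d)}$, i.e. after division by $|R|$ the required order $|R|^{-(d-2)/(2d)}$ (in fact better for $p>1$).

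To transfer the estimate from $(T_\#\alpha,T_\#\beta)$ back to $(\alpha,\beta)$, I would not apply the triangle inequality naively (the separate ``spreading costs'' $W_R^p(\alpha,T_\#\alpha)$ and $W_R^p(T_\#\beta,\beta)$ are each of order $|R|$ and would dominate). Instead, I would \emph{lift} the optimal coupling $\sigma$ of $T_\#\alpha$ and $T_\#\beta$ to an atomic coupling $\pi$ of $\alpha$ and $\beta$ by setting $\pi(\{Y_j\}\times\{Y_{j'}\}):=\sigma(\mathrm{Lag}(Y_j)\times \mathrm{Lag}(Y_{j'}))$. This $\pi$ has the correct marginals precisely because $T_\#\alpha$ and $T_\#\beta$ are piecewise constant on Laguerre cells with total values $\kappa_i$ and $\kappa$ respectively. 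The crucial point is that $\sigma$ is essentially diagonal: since $T_\#\alpha-T_\#\beta=\sum_i(\kappa_i-\kappa)\chi_{A_i}\cdot\frac{\lambda(R)}{|R|}$ has total mass $\les \sqrt{|R|}$ by the concentration of $\kappa_i-\kappa$, at most $\sqrt{|R|}$ of the mass of $\pi$ is \emph{off-diagonal}; the diagonal part (atoms sent to themselves) contributes zero to the cost. On the off-diagonal part one uses $|Y_j-Y_{j'}|^p\les |Y_j-x|^p+|x-y|^p+|y-Y_{j'}|^p$, integrating against $\sigma$: the middle term gives $W_R^p(T_\#\alpha,T_\#\beta)$, and the outer ``spreading'' terms, restricted to off-diagonal mass $\les \sqrt{|R|}$, contribute at most $\sqrt{|R|}$ times the typical $p$-th power of a Laguerre-cell radius, i.e. $O(\sqrt{|R|})\le |R|^{(d+2)/(2d)}$. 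The main obstacle will be controlling the tail of the Laguerre-cell diameters (distributed like nearest-neighbor distances for the Poisson process, with typical size $1$ but logarithmic extremes), so that the contribution of the off-diagonal spreading is indeed $\les|R|^{(d+2)/(2d)}$ in expectation; this can be handled by a standard moment bound on Poisson gaps combined with a further reduction to a slightly smaller good event.
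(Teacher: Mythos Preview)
Your lifting step contains a genuine gap. You assert that the $W_p$-optimal coupling $\sigma$ between $T_\#\alpha$ and $T_\#\beta$ has off-diagonal mass (with respect to the Laguerre partition) at most $|R|^{1/2}$, justified by $\|T_\#\alpha-T_\#\beta\|_{TV}\les|R|^{1/2}$. But a small total variation only guarantees the \emph{existence} of some coupling with small off-diagonal mass (fix the common part $\nu=\min(T_\#\alpha,T_\#\beta)$ and transport the residuals); it says nothing about the $W_p$-optimal one. Indeed, the CZ bound gives typical displacement $|R|^{-(d-2)/(2d)}$, and with $\sim|R|$ unit Laguerre cells (total boundary area $\sim|R|$) one heuristically expects off-diagonal mass of order $|R|^{(d+2)/(2d)}\gg|R|^{1/2}$; turning this into a rigorous bound would require nontrivial control on the joint distribution of displacement and distance to cell boundaries, plus the Laguerre-diameter tail you already flag. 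If instead you switch to the ``identity on $\nu$ plus residual'' coupling to secure the off-diagonal bound, you lose control of the middle term: the residuals $T_\#\alpha-\nu$ and $T_\#\beta-\nu$ are supported on the disjoint regions $\bigcup_{\kappa_i>\kappa}A_i$ and $\bigcup_{\kappa_i<\kappa}A_i$, so their $W_p^p$ cost is genuinely of order $\diam(R)^p\cdot|R|^{1/2}$, which after normalization gives only $|R|^{p/d-1/2}$ --- too weak for $p>1$. In short, neither choice of $\sigma$ delivers both properties you need simultaneously.

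The paper avoids this tension altogether by spreading only a \emph{fraction} $\theta\in(0,1)$ of the atomic mass. On each $R_i$ one writes $\kappa_i\lambda=(\kappa_i-\theta)\lambda+\theta\lambda$ and replaces only the second piece by $\theta\,\frac{\lambda(R_i)}{|R_i|}dx$; the spreading cost is then $\theta\sum_i W_{R_i}^p(\lambda,\frac{\lambda(R_i)}{|R_i|})\les\theta\,|R|$ by \eqref{eq:boundfR}. The transport between the two resulting Lebesgue pieces (the atomic part $(\kappa_i-\theta)\lambda$ is common to both sides and drops out by sub-additivity) now has densities $\sim\theta$, so Lemma~\ref{lem:CZ} applies with a penalty $\theta^{-(p-1)}$ and yields $\les\theta^{-(p-1)}|R|^{1-p(d-2)/(2d)}$. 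Optimizing $\theta=|R|^{-(d-2)/(2d)}$ balances the two terms and gives \eqref{eq:claimsubbi} directly, with no need to analyze Laguerre geometry or off-diagonal mass at all. The role your hoped-for ``off-diagonal fraction'' would play is taken over by the explicit parameter $\theta$, chosen a priori rather than estimated a posteriori.
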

\begin{proof}
% The proof of \eqref{eq:subadbi} follows the same line of argument as for \eqref{subad}. The only difference lies in the treatment of the sub-additivity defect. For simplicity we only consider the dyadic case and prove that for $L\ge 1$, 
% \begin{equation}\label{eq:subadbidyadic}
%  \fbi(2L)\le \fbi(L)+ \frac{C}{L^{\frac{d-2}{2p}}}.
% \end{equation}
% We argue as for \eqref{dyadic} and divide the cube $Q_{2L}$ in $2^d$ sub-cubes $Q_i$  and let $\kappa_i= \frac{\mu(Q_i)}{\lambda(Q_i)}$. Using \eqref{eq:mainsub}, we get that for $\eps\in(0,1)$ 
% \begin{equation}\label{eq:subadbieps}
%  \fbi(2L)\le (1+\eps) \fbi(L)+ \frac{C}{\eps^{p-1}}\EE\lt[\frac{1}{|Q_{2L}|} W_{Q_{2L}}^p\lt(\sum_i \kappa_i \chi_{Q_i} \lambda, \kappa \lambda\rt)\rt].
% \end{equation}
%We now need to estimate the second right-hand side term. As alluded to, it is at this step that the proof departs from the previous one. 
As opposed to \eqref{eq:claimsubrefR}, since $\lambda$ is atomic, we cannot directly use \eqref{eq:estimCZ}. The idea is to use as intermediate step the matching between $\lambda$ and the reference measure. 
Let us point out that 
since  $\fref(R_i)=\EE\lt[\frac{1}{|R_i|}W^p_{R_i}\lt(\lambda, \frac{\lambda(R_i)}{|R_i|}\rt)\rt]$ is of order one, we cannot apply naively the triangle inequality. 
The main observation is that since $|\kappa_i-\kappa|\ll1$ with overwhelming probability, the  amount of mass which actually needs to be transported is very small.\\

 Let $1\gg\theta>0$ to be optimized later on. 
For every $i$ let 
\[\theta_i:= (\kappa-\kappa_i)+\theta\]
and assume that 
\begin{equation}\label{assumelambda}
 \theta\ge 2\max_i |\kappa-\kappa_i|
\end{equation}
so that $\frac{3}{2}\theta\ge \theta_i\ge \frac{1}{2}\theta>0$. Notice that thanks to the Cr\'amer-Chernoff bounds \eqref{eq:Cramer}, 
\eqref{assumelambda} is satisfied with overwhelming probability as long as $\theta\gg |R|^{-1/2}$. Using the triangle inequality \eqref{eq:triangle} we have
\begin{align*}
 \lefteqn{W_{R}^p\lt(\sum_i \kappa_i \chi_{R_i} \lambda, \kappa \lambda\rt)}\\
 &\les W_{R}^p\lt(\sum_i \kappa_i \chi_{R_i} \lambda, \sum_i  \chi_{R_i} \lt[ (\kappa_i-\theta) \lambda + \theta \frac{\lambda(R_i)}{|R_i|} dx\rt]\rt)\\
 &+ W_{R}^p\lt(\sum_i  \chi_{R_i} \lt[ (\kappa_i-\theta) \lambda + \theta \frac{\lambda(R_i)}{|R_i|} dx\rt],\sum_i  \chi_{R_i} \lt[ (\kappa_i-\theta) \lambda + \theta_i \frac{\lambda(R_i)}{|R_i|} dx\rt]\rt)\\
 &+  W_{R}^p\lt(\sum_i  \chi_{R_i} \lt[ (\kappa_i-\theta) \lambda + \theta_i \frac{\lambda(R_i)}{|R_i|} dx\rt], \kappa \lambda\rt).
\end{align*}
Notice that $\sum_i\theta \lambda(R_i)=\sum_i\theta_i\lambda(R_i)$ by definition of $\theta_i$ and the fact that $\kappa\lambda(R)=\mu(R)=\sum_i \kappa_i \lambda(R)$ so that the second term is well defined.\\
We now estimate the three terms separately. The first and third are estimated in a similar way and we thus focus only on the first one. By sub-additivity \eqref{eq:sub} of $W^p_{R}$, we have 
% \begin{align*}
%  \lefteqn{W_{Q_{2L}}^p\lt(\sum_i \kappa_i \chi_{Q_i} \lambda, \sum_i  \chi_{Q_i} \lt[ (\kappa_i-\theta) \lambda + \theta \frac{\lambda(Q_i)}{|Q_i|} dx\rt]\rt)}\\
%  &\le \theta \sum_i W_{Q_i}^p\lt(\lambda, \frac{\lambda(Q_i)}{|Q_i|} \rt).
% \end{align*}
\[W_{R}^p\lt(\sum_i \kappa_i \chi_{R_i} \lambda, \sum_i  \chi_{R_i} \lt[ (\kappa_i-\theta) \lambda + \theta \frac{\lambda(R_i)}{|R_i|} dx\rt]\rt)
 \le \theta \sum_i W_{R_i}^p\lt(\lambda, \frac{\lambda(R_i)}{|R_i|} \rt).
\]
We turn to the middle term. Again by sub-additivity of $W^p_{R}$,
% \begin{align*}
%  \lefteqn{W_{Q_{2L}}^p\lt(\sum_i  \chi_{Q_i} \lt[ (\kappa_i-\theta) \lambda + \theta \frac{\lambda(Q_i)}{|Q_i|} dx\rt],\sum_i  \chi_{Q_i} \lt[ (\kappa_i-\theta) \lambda + \theta_i \frac{\lambda(Q_i)}{|Q_i|} dx\rt]\rt)}\\
%  &\le W_{Q_{2L}}^p\lt(\sum_i \chi_{Q_i} \theta \frac{\lambda(Q_i)}{|Q_i|} , \sum_i \chi_{Q_i} \theta_i \frac{\lambda(Q_i)}{|Q_i|} \rt).
% \end{align*}
\begin{multline*}
W_{R}^p\lt(\sum_i  \chi_{R_i} \lt[ (\kappa_i-\theta) \lambda + \theta \frac{\lambda(R_i)}{|R_i|} dx\rt],\sum_i  \chi_{R_i} \lt[ (\kappa_i-\theta) \lambda + \theta_i \frac{\lambda(R_i)}{|R_i|} dx\rt]\rt) \\
 \le W_{R}^p\lt(\sum_i \chi_{R_i} \theta \frac{\lambda(R_i)}{|R_i|} , \sum_i \chi_{R_i} \theta_i \frac{\lambda(R_i)}{|R_i|} \rt).
\end{multline*}
Using  \eqref{eq:estimCZ} in the event  $\{\lambda(R_i)/|R_i|\sim 1\}$ (which has overwhelming probability) and recalling that we assumed
 $\frac{3}{2}\theta\ge \theta_i\ge \frac{1}{2}\theta$, we have
\[
  W_{R}^p\lt(\sum_i \chi_{R_i} \theta \frac{\lambda(R_i)}{|R_i|} , \sum_i \chi_{R_i} \theta_i \frac{\lambda(R_i)}{|R_i|} \rt)\les\frac{\diam^p(R)}{\theta^{p-1}}\sum_i \lt|\kappa-\kappa_i\rt|^p |R_i|.
\]
Putting these two estimates together, dividing by $|R|$ and taking expectations we find
\begin{align*}
\EE\lt[\frac{1}{|R|} W_{R}^p\lt(\sum_i \kappa_i \chi_{R_i} \lambda, \kappa \lambda\rt)\rt]&\les \theta \sum_i\fref(R_i) + \frac{\diam^p(R)}{\theta^{p-1}} \sum_i \EE\lt[\lt|\kappa-\kappa_i\rt|^p\rt]\\
&\stackrel{\eqref{eq:boundfR}\&\eqref{eq:momentPoi}}{\les} \theta+ \frac{1}{\theta^{p-1}}\frac{1}{ |R|^{p\frac{(d-2)}{2d}}}.
\end{align*}
Optimizing in $\theta$ by choosing $\theta= |R|^{-\frac{d-2}{2d}}\gg |R|^{-1/2}$ (so that \eqref{assumelambda} is satisfied) this yields \eqref{eq:claimsubbi}.\\
\end{proof}

Comparing \eqref{eq:claimsubref} and \eqref{eq:claimsubbi}, one may wonder if \eqref{eq:claimsubbi} is suboptimal and could be improved. Let us prove that it is not the case, at least if we consider the slightly more regular situation of the matching to a deterministic grid.
\begin{proposition}\label{prop:grid}
 Let $\lambda=\sum_{X\in\Z^d} \delta_{X}$ and for every $L\gg1$, $(\mu_1,\cdots, \mu_{2^d})$ be $2^d$ independent Poisson random variables of parameter $L^d$. Writing for $L\gg1$, 
 $Q_{2L}=\cup_{i=1}^{2^d} Q_i$ where $Q_i$ are disjoint cubes of sidelength $L$ and defining $\kappa_i=\frac{\mu_i}{\lambda(Q_i)}$, $\kappa=\frac{\sum_i \mu_i}{\lambda(Q_{2L})}$, we have for $d\ge 3$ and $p\ge 1$,
\begin{equation*}\label{eq:grid}
 \EE\lt[\frac{1}{|Q_{2L}|} W_{Q_{2L}}^p\lt(\sum_i \kappa_i \chi_{Q_i} \lambda, \kappa \lambda\rt)\rt]\sim \frac{1}{L^{\frac{d-2}{2}}}.
\end{equation*}

 \end{proposition}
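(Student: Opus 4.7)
\emph{Proof plan.} The asserted equivalence is two-sided, so I treat each inequality separately. The upper bound $\les L^{-(d-2)/2}$ follows by running the proof of Proposition~\ref{prop:subbi} verbatim with $R=Q_{2L}$ and partition $\{Q_i\}$: for the deterministic lattice $\lambda$, the quantity $\fref(Q_i)$ is the deterministic cost of matching $\Z^d\cap Q_i$ to the uniform measure on $Q_i$, which is bounded by a constant $\les 1$; and the moment estimate \eqref{eq:momentPoi} for $\EE[|\kappa-\kappa_i|^p]\les L^{-pd/2}$ remains valid since the $\mu_i$ are independent Poisson($L^d$). Optimizing the free parameter $\theta$ as in that proof yields $\les|R|^{-(d-2)/(2d)}\sim L^{-(d-2)/2}$.

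For the matching lower bound the plan is to project onto the first coordinate. Set $\mu_0=\sum_i\kappa_i\chi_{Q_i}\lambda$, $\nu_0=\kappa\lambda$, $T(x)=x_1$, $\bar\kappa_\pm=2^{-(d-1)}\sum_{\epsilon_1=0,1}\kappa_\epsilon$, and $\Delta=\bar\kappa_--\kappa=(\bar\kappa_--\bar\kappa_+)/2$. The pushforward $T_\#\mu_0$ places mass $2^{d-1}L^{d-1}(\kappa+\Delta)$ at each integer $k<L$ and $2^{d-1}L^{d-1}(\kappa-\Delta)$ at each $k\ge L$, while $T_\#\nu_0$ places $2^{d-1}L^{d-1}\kappa$ at each integer in $\{0,\ldots,2L-1\}$. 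Since $T$ is $1$-Lipschitz it contracts the Wasserstein cost, so $W^p_{Q_{2L}}(\mu_0,\nu_0)\ge W_p^p(T_\#\mu_0,T_\#\nu_0)$, and this one-dimensional cost is given exactly by the CDF formula $W_p^p=\int_0^M|F_S^{-1}-F_T^{-1}|^p\,du$, with $M$ the common total mass.

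The key observation is that, as long as $L|\Delta|\le c\kappa$ for a small absolute constant $c$, the monotone coupling has integer displacement $F_S^{-1}(u)-F_T^{-1}(u)\in\{-1,0,1\}$ for every $u$: consecutive misalignments between the two CDFs accumulate at rate $|\Delta|/\kappa$ per lattice step and cannot exceed one full lattice period. On this good event $|\cdot|^p$ coincides with $|\cdot|$ on the displacements in play, so $W_p^p=W_1=\int_\R|F_S-F_T|$; a direct computation shows that $F_S-F_T$ is a triangular profile peaking near $k=L$ at height $\sim L^d|\Delta|$, giving $W_1\sim L^{d+1}|\Delta|$. After dividing by $|Q_{2L}|\sim L^d$ and taking expectation---using $\EE[|\Delta|]\sim L^{-d/2}$ from \eqref{eq:momentPoi} and controlling the complementary bad event via \eqref{eq:Cramer}, whose probability is at most $\exp(-cL^{d-2})$ for $d\ge 3$---the lower bound $\ges L\cdot L^{-d/2}=L^{-(d-2)/2}$ follows.

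The main obstacle is the $p$-independence of the lower bound. A naive continuous-density analogue---replacing the atomic $\lambda$ by Lebesgue measure in the projected one-dimensional problem---would give $W_p^p\sim L^{d+1}|\Delta|^p/\kappa^{p-1}$, and hence only $\ges L^{-p(d-2)/2}$ in expectation, which is strictly weaker than the target rate for $p>1$. The essential gain comes from the atomic, integer-lattice structure of $\lambda$: the monotone optimal transport between lattice-supported measures must displace mass by integer amounts, so in the small-deviation regime each displacement is either $0$ or $1$, and then the $p$-th power in $W_p^p$ collapses to the linear cost $W_1$. Carefully identifying this good event, verifying the displacement-$\le 1$ property on it, and ensuring that the complementary rare event contributes negligibly to the expectation is the technically delicate step.
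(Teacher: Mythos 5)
Your proof is correct, but both halves take routes different from the paper's, so a comparison is worthwhile. For the upper bound you recycle the proof of Proposition~\ref{prop:subbi} (split off a $\theta$-fraction of mass, apply the Calder\'on--Zygmund estimate \eqref{eq:estimCZ} to the two piecewise-constant densities, optimize $\theta\sim L^{-(d-2)/2}$); this does work, since the lattice makes $\frac{1}{|Q_i|}W^p_{Q_i}\lt(\lambda,\frac{\lambda(Q_i)}{|Q_i|}\rt)\les 1$ deterministic and \eqref{eq:momentPoi} still controls $\EE[|\kappa-\kappa_i|^p]$, and indeed the paper states explicitly that one could argue exactly as for \eqref{eq:claimsubbi}. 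The paper instead presents an elementary alternative: the one-dimensional lattice estimate \eqref{eq:estim1d} (mass of order $k|\kappa_1-\kappa_2|$ moved by a single lattice step) iterated coordinate direction by coordinate direction, precisely to show that the sub-additivity error can be handled without the PDE ansatz; your choice is shorter given Proposition~\ref{prop:subbi} but forgoes that point. For the lower bound you project onto the first coordinate and compute the one-dimensional cost by the CDF formula, getting $\ges L\,\EE[|\Delta|]\sim L^{-(d-2)/2}$, whereas the paper stays in $\R^d$: it notes that any coupling between the two lattice-supported measures has $|x-y|\ge 1$ off the diagonal, so $W_p^p\ge W_1$, and bounds $W_1$ from below by duality with the test function $d(x,\Sigma)\sum_i\eps_i\chi_{Q_i}$, which captures all of $\sum_i|\kappa_i-\kappa|$ rather than only the first-coordinate aggregate $\Delta$ (the rate is the same either way). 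One remark: the step you single out as technically delicate --- showing that on the event $L|\Delta|\le c\kappa$ the monotone displacements lie in $\{-1,0,1\}$ so that $W_p^p=W_1$ --- is not needed. Since both projected measures are supported on $\Z$, every admissible coupling moves mass by integer distances, so $|x-y|^p\ge|x-y|$ on its support and $W_p^p\ge W_1$ unconditionally; the triangular CDF computation then gives $W_1\ges L^{d+1}|\Delta|$ on the whole probability space, and no good/bad event splitting is required. The only remaining point (shared with the paper, which is equally terse) is the lower bound $\EE[|\Delta|]\ges L^{-d/2}$, which does not follow from \eqref{eq:momentPoi} (an upper bound) but is obtained, e.g., by H\"older's inequality between the second and third absolute moments of the normalized Poisson fluctuations.
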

\begin{proof}
 \textit{Step 1.}[The lower bound] We start by proving that 
 \begin{equation}\label{eq:claimlowergrid}
  \EE\lt[\frac{1}{|Q_{2L}|} W_{Q_{2L}}^p\lt(\sum_i \kappa_i \chi_{Q_i} \lambda, \kappa \lambda\rt)\rt]\ges \frac{1}{L^{\frac{d-2}{2}}}.
 \end{equation}
For this we notice that if  $\pi$ is any admissible coupling, then $|x-y|\ge 1$ for every   $(x,y)$ in the support of $\pi$ with $x\neq y$ and thus\footnote{recall that $ W^1_{Q_{2L}}$ denotes the $1-$Wasserstein distance.} 
 \[
  W_{Q_{2L}}^p\lt(\sum_i \kappa_i \chi_{Q_i} \lambda, \kappa \lambda\rt)\ge W^1_{Q_{2L}}\lt(\sum_i \kappa_i \chi_{Q_i} \lambda, \kappa \lambda\rt).
 \]
Let $\Sigma=\cup_i \partial Q_i\cap Q_{2L}$ and $\eps_i=sign(\kappa_i-\kappa)$. Using $\xi(x)=d(x,\Sigma)\sum_i  \eps_i\chi_{Q_i}(x)$ as test function  in the dual formulation of $ W^1_{Q_{2L}}$, we obtain 
\[
 W_{Q_{2L}}^1\lt(\sum_i \kappa_i \chi_{Q_i} \lambda, \kappa \lambda\rt)\ge \sum_i \sum_{X\in \Z^d\cap Q_i} d(X,\Sigma) |\kappa_i-\kappa|\ges L^{d+1}\sum_i|\kappa_i-\kappa|.
\]
Taking expectations we find \eqref{eq:claimlowergrid}.\\

\medskip
\textit{Step 2.}[The upper bound] We turn to the corresponding upper bound,
\begin{equation}\label{eq:claimuppergrid}
 \EE\lt[\frac{1}{|Q_{2L}|} W_{Q_{2L}}^p\lt(\sum_i \kappa_i \chi_{Q_i} \lambda, \kappa \lambda\rt)\rt]\les \frac{1}{L^{\frac{d-2}{2}}}.
\end{equation}
One could argue exactly as for \eqref{eq:claimsubbi} but we provide an alternative proof which uses a one-dimensional argument instead of  \eqref{eq:estimCZ}. Notice that this argument could have also been used to give a different proof of \eqref{eq:claimsubref}. \\

\textit{Step 2.1.}[The one-dimensional estimate] Let $\lambda=\sum_{X\in \Z} \delta_X$ and for  
  $\kappa_1$, $\kappa_2>0$ and  $L\gg1$,  define $\kappa=\frac{\kappa_1 \lambda(0,L/2)+\kappa_2\lambda(L/2,L)}{\lambda(0,L)}$. 
  We claim that if $|\kappa_1-\kappa_2|\ll L^{-1}\min(\kappa_1,\kappa_2)$, then
\begin{equation}\label{eq:estim1d}
 W^p_{(0,L)}\lt(\kappa_1 \chi_{(0,L/2)}\lambda +\kappa_2 \chi_{(L/2,L)}\lambda,\kappa \lambda\rt)\les L^2|\kappa_1-\kappa_2|.
\end{equation}
Let us assume without loss of generality that $\kappa_1\ge\kappa_2$. The optimal transport map is essentially symmetric around $L/2$ and is given in $(0,L/2)$ by sending a mass
$(k+1)\frac{\lambda(L/2,L)}{\lambda(0,L)}(\kappa_1-\kappa_2)$  from position $k$ to $k+1$ (which is admissible since by hypothesis $k|\kappa_1-\kappa_2|\ll \min(\kappa_1,\kappa_2)$) so that 
 \[
   W^p_{(0,L)}(\mu,\lambda)\les \sum_{k=0}^{L/2} k |\kappa_1-\kappa_2|\sim L^2|\kappa_1-\kappa_2|.
 \]
This proves \eqref{eq:estim1d}.\\

\textit{Step 2.2.}[Proof of \eqref{eq:claimuppergrid}]
The proof is made recursively by layers. In the first step, we pass from $2^d$ cubes to $2^{d-1}$ rectangles of the form $x+(0,L)\times(0,L/2)^{d-1}$.
For this we remark that for every cube $Q_i$ there is exactly one cube $Q_j$ such that $Q_j=Q_i \pm \frac{L}{2} e_1$ (where $e_i$ is the canonical basis of $\R^d$). Let us focus on $Q_1=(0,L/2)^d$ and $Q_2= \frac{L}{2} e_1+Q_1$. 
Let $R=Q_1\cup Q_2= (0,L)\times(0,L/2)^{d-1}$. Define $\hat{\kappa}= \frac{\kappa_1\lambda(Q_1)+\kappa_2\lambda(Q_2)}{\lambda(R)}=\frac{\mu_1+\mu_2}{\lambda(R)}$. We claim that 
 \[
  \EE\lt[\frac{1}{|R|}W_{R}^2\lt(\kappa_1 \chi_{Q_1}\lambda + \kappa_2 \chi_{Q_2}\lambda, \hat{\kappa} \lambda\rt)\rt]\les \frac{1}{L^{\frac{d-2}{2}}}.
 \]
For this we notice that in $R$,  the measures $\kappa_1 \chi_{Q_1}\lambda + \kappa_2 \chi_{Q_2}\lambda$ and $\hat{\kappa} \lambda$ are constant in the directions orthogonal
to $e_1$ and thus 
\[
 \frac{1}{|R|}W_{R}^p\lt(\kappa_1 \chi_{Q_1}\lambda + \kappa_2 \chi_{Q_2}\lambda, \hat{\kappa} \lambda\rt)\les \frac{1}{L}W_{(0,L)}^p\lt( \kappa_1 \chi_{(0,L/2)}\lambda +  \kappa_2 \chi_{(L/2,L)}\lambda, \hat{\kappa} \lambda\rt).
\]
Since $\kappa_i=\frac{\mu_i}{\lambda(Q_i)}$, by the Cram\'er-Chernoff bounds \eqref{eq:Cramer}, we have $|\kappa_i-1|=O(L^{-d/2})$ with overwhelming probability.  Hence, if $d\ge 3$ we may apply \eqref{eq:estim1d} to get 
\[
  \EE\lt[\frac{1}{|R|}W_{R}^p\lt(\kappa_1 \chi_{Q_1}\lambda + \kappa_2 \chi_{Q_2}\lambda, \hat{\kappa} \lambda\rt)\rt]\les L \EE[|\kappa_1-\kappa_2|]\les \frac{1}{L^{\frac{d-2}{2}}}
\]
which proves the claim.\\
We finally iterate this  this argument $2^d$ times. At every step $k$ we have $2^{d-k}$ rectangles of the form $x+(0,L)^{k}\times(0,L/2)^{d-k}$ and each iteration has a cost of the same order (namely $L^{-(d-2)/2}$).
 Using triangle inequality this concludes the proof of \eqref{eq:claimuppergrid}.

\end{proof}
\begin{remark}
 Notice that  in the proof of \eqref{eq:estim1d}, we  locally move a mass of order $L^{-(d-2)/2}$ which corresponds to the optimal choice of $\theta$ in the proof of \eqref{eq:claimsubbi}. 
\end{remark}

\section{De-Poissonization}\label{Poitostan}
In this section we discuss  how to transfer limit results from matching problems  with Poisson point process, i.e., with a random number of points, to those of with a deterministic number of points. % the optimal matching problem of $n$ independent and uniformly distributed points in the unit cube $Q_1$. % $of $L^d$ independent and  uniformly distributed points (in $Q_L$) to Lebesgue, or equivalently

We use the following general result.

\begin{proposition}\label{prop:depois}
Let $f: (0, \infty) \times \N^k \to [0, \infty)$, $(L, n) \mapsto f(L|n)$, satisfy the following assumptions:
\begin{enumerate}
\item \emph{($p$-homogeneity)} $f(L|n) = L^p f(1|n)$, for every $n \in \N^k$, $L >0$,
\item \emph{(boundedness)} $f(1 |n ) \lesssim 1$, for every $n \in \N^k$,
\item  \emph{(monotonicity)}   $f(1|n) \le f(1|m)$, for every $m, n \in \N^k$ such that $m_i \le n_i$ for $i =1, \ldots, k$.
\end{enumerate}
Define
\[ f(L) = \EE\sqa{ f(L | N_L)}\]
where $N_L = (N_{L,i})_{i=1}^k$ are i.i.d.\ Poisson random variables with parameter $L^d$. Then,

\[ \liminf_{n \to \infty} n^{\frac{p}{d}} f(1|(n, \ldots, n)) = \liminf_{L \to \infty} f(L), \quad \text{and} \quad \limsup_{n \to \infty} n^{\frac{p}{d}} f(1|(n, \ldots, n)) = \limsup_{L \to \infty} f(L). \]
\end{proposition}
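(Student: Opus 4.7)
The starting point is the $p$-homogeneity, which gives $f(L) = L^p \, \EE[f(1|N_L)]$ and thus reduces the problem to comparing $\EE[f(1|N_L)]$ with $f(1|(n,\ldots,n))$ for $n \approx L^d$. The strategy is a two-sided sandwich exploiting monotonicity together with the Poisson concentration bounds of Lemma \ref{lem:concentrationPoi}.

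\emph{Upper bound.} Fix $\eps>0$ small and set $m = \lceil L^d(1-\eps)\rceil$, so that $L^d$ exceeds $m$ by order $\eps L^d$. The Cram\'er--Chernoff estimate \eqref{eq:Cramer} applied to each component together with a union bound give
\[ \PP(\exists\, i : N_{L,i} < m) \les k\exp(-c\eps^2 L^d).\]
On the complementary high-probability event, monotonicity yields $f(1|N_L) \le f(1|(m,\ldots,m))$, while on the bad event the boundedness assumption gives the crude estimate $f(1|N_L) \les 1$. Multiplying by $L^p$ and using $L^p/m^{p/d} \to (1-\eps)^{-p/d}$ together with $L^p\exp(-c\eps^2 L^d) \to 0$, we conclude
\[ f(L) \le (1-\eps)^{-p/d}\, m^{p/d} f(1|(m,\ldots,m))\, (1+o(1)).\]

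\emph{Lower bound.} Now take $m = \lfloor L^d(1+\eps)\rfloor$, so that $L^d < m$ by a margin of $\eps L^d$. The symmetric Cram\'er--Chernoff estimate shows $N_{L,i} \le m$ for all $i$ with overwhelming probability; on this event monotonicity gives $f(1|N_L) \ge f(1|(m,\ldots,m))$, and since $f\ge 0$ we simply discard the complement. Multiplying by $L^p$ and using $L^p/m^{p/d} \to (1+\eps)^{-p/d}$ yields
\[ f(L) \ge (1+\eps)^{-p/d}\, m^{p/d} f(1|(m,\ldots,m))\, (1-o(1)).\]

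\emph{Passage to limits.} The key observation is that, in both bounds, as $L$ ranges continuously over $[L_0,\infty)$ the integer $m=m(L)$ sweeps through every sufficiently large integer on nontrivial $L$-intervals, so
\[ \liminf_{L\to\infty}\, m(L)^{p/d} f(1|(m(L),\ldots,m(L))) = \liminf_{n\to\infty} n^{p/d} f(1|(n,\ldots,n)),\]
and similarly for $\limsup$. Taking $\liminf_L$ (resp.\ $\limsup_L$) in the two sandwich estimates and then letting $\eps\to 0$ produces the claimed equalities. The only real subtlety is precisely this matching of the continuous parameter $L$ with the integer index $n$, which is resolved by the flexibility in the choice of $m(L)$; the rest is bookkeeping combining $p$-homogeneity, monotonicity, and boundedness.
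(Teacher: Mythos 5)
Your proof is correct and follows essentially the same route as the paper: sandwich $f(L)=L^p\,\EE[f(1|N_L)]$ between values of $f(1|\cdot)$ at the edges of a concentration window via monotonicity, control the bad event with boundedness (or positivity) and the Cram\'er--Chernoff bound \eqref{eq:Cramer}, and match the continuous parameter $L$ with the integer $n$. The only difference is bookkeeping: you use a window of fixed relative width $\eps$ and remove the loss by letting $\eps\to0$ after the discretization argument, whereas the paper takes $\delta=L^{d/2}\sqrt{2\beta\log L}$ and chooses $L(n)$ so that the window edge equals $n$ exactly, which in addition yields the quantitative one-sided rates exploited in Remark~\ref{rem:rates}.
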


\begin{proof}
%La concentrazione della Poisson la usiamo anche sopra, sarà il caso di ricordarla all'inizio
%he Poisson tail bounds 
%\[ \PP \bra{ \abs{ \mu(Q_L) - L^d} \ge  \delta } \le 2 e^{- \frac 1 2 \frac{\delta^2}{L^d + \delta}}. \]

Let $0< \delta < L^d$ and introduce the event
\[ A = \cur{  |N_{L,i} - L^d| < \delta \quad \text{for $i =1, \ldots, k$}}.\]
By independence and Poisson tail bounds \eqref{eq:Cramer},
\begin{equation}\label{eq:PA}
\PP(A) \ge \bra{ 1- 2 \exp\lt(- \frac 1 2 \frac{\delta^2}{L^d + \delta}\rt)}^k.
\end{equation}
We decompose
\[ f(L) = \EE\sqa{  f(L | N_L) | A} \PP(A) + \EE\sqa{  f(L | N_L) | A^c } \PP( A^c).\]
If $A$ holds, we use monotonicity of $f(L|n)$ to argue that
\[ L^p f(1| a ) \le  \EE\sqa{  f(L | N_L) | A} \le L^p f(1| b ),\]
where $a=L^d + \delta$, $b = L^d-\delta$. Otherwise, we use $(1)$ and $(2)$ to obtain
\[  0 \le \EE\sqa{  f(L | N_L) | A^c }  \le L^p.\]
Combining these inequalities, we have  
\begin{equation}\label{eq:upper-lower-a-b}  L^p f(1| a ) \PP(A) \le f(L) \le L^p\bra{ f(1| b ) + (1-\PP(A))}.\end{equation}

For any $n \ge 1$, let $L=L(n)$ be such such that $L^d + L^{d/2} \sqrt{ 2 \beta \log L}= n$, for some fixed $\beta > p$. Then, we have from \eqref{eq:upper-lower-a-b} with 
$\delta =L^{d/2}\sqrt{ 2\beta \log L}$,
\begin{equation*}\label{eq:upper-a} f(1| n ) \le \frac{  f(L) }{L^p \PP(A)}.\end{equation*}
As $n \to \infty$, we have
\begin{equation*}\label{eq:quantitative-n-L} 
\frac{ n^{\frac{p}{d}}}{L^{p}} = (1+ L^{-\frac{d}{2}} \sqrt{ 2 \beta \log L})^{\frac{p}{d}} = 1 + O\bra{ \sqrt{ \frac{\log n}{n}}}.\end{equation*}
Moreover, by \eqref{eq:PA},
\begin{equation*} \PP(A) \ge \bra{ 1-2\exp\lt(- \frac 1 2 \frac{\delta^2}{L^d + \delta}\rt)}^k = 1- O(L^{-\beta}) = 1 - O\lt(n^{-\frac{\beta}{d}}\rt).\label{eq:quantitative-p-a}
\end{equation*}
It follows that 
\[\limsup_{n\to \infty}  n^{\frac{p}{d}} f(1|(n, \ldots, n)) \le \limsup_{L \to \infty}  f(L), \quad \text{and} \quad  \liminf_{n\to \infty}  n^{\frac{p}{d}} f(1|(n, \ldots, n)) \le \liminf_{L \to \infty} f(L) .\]
To obtain the converse inequalities, we argue analogously choosing instead $L = L(n)$ such that $L^d - L^{d/2} \sqrt{ 2\beta \log L} = n$.
% , so that 
% \begin{equation}\label{eq:lower-b} f(1|n) \ge \frac{f(L)}{L^p} -(1-\PP(A)).\end{equation}
% Arguing as in \eqref{eq:quantitative-n-L} and \eqref{eq:quantitative-p-a}, we obtain that
% \begin{equation}\label{eq:quantitative-n-L-2} \frac{ n^{p/d}} {L^{p/d}}= 1 + O \bra{ \sqrt{ \frac{\log n }{n}}}, \quad  { n^{p/d}}(1 - \PP(A)) = O\bra{ n^{(p-\beta)/d}},
% \end{equation}
% and the thesis follows.
\end{proof}

We now apply Proposition~\ref{prop:depois} to  matching problems. Let us first consider the case of matching to the reference measure.

\begin{theorem}\label{theo:match-ref-measure-unit-cube}
Let $d \ge 3$, $p\ge 1$, and $(X_{i})_{i \ge 1}$ be i.i.d.\ uniform random variables on $[0,1]^d$. 
Then,  
\[ \lim_{n \to \infty} n^{\frac{p}{d}} \EE\sqa{  W_{[0,1]^d}^p\bra{ \frac 1 n \sum_{i=1}^n \delta_{X_i}, 1}} = \frefinf,\]
with $\frefinf$ as in Theorem~\ref{theo:PoiLeb}.
\end{theorem}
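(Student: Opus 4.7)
The plan is to apply the de-Poissonization result, Proposition~\ref{prop:depois}, with $k=1$ and the function
\[
f(L|n) \;:=\; L^p\, \EE\!\left[W^p_{[0,1]^d}\!\left(\tfrac{1}{n}\sum_{i=1}^n \delta_{X_i},\, 1\right)\right]\quad \text{for } n\ge 1,
\]
extended by the trivial diameter bound $f(L|0) := L^p d^{p/2}$. With this choice, $n^{p/d}f(1|n)$ is precisely the quantity whose limit we want to identify with $\frefinf$, so the conclusion will follow once we verify the hypotheses of Proposition~\ref{prop:depois} and prove that $f(L) := \EE[f(L|N_L)] \to \frefinf$ as $L\to\infty$, where $N_L \sim \mathrm{Poisson}(L^d)$.

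The $p$-homogeneity is built into the definition, and the boundedness $f(1|n) \lesssim 1$ follows from \eqref{eq:boundE} combined with a Jensen argument: since $\nu \mapsto W^p_{[0,1]^d}(\frac{1}{n}\sum\delta_{X_i},\nu)$ is convex and $\nu_n = \frac{1}{n}\sum\delta_{Y_i}$ has expectation equal to Lebesgue measure, one gets $\EE[W^p(\mu_n,1)]\le \EE[W^p(\mu_n,\nu_n)] \lesssim n^{-p/d}$. For monotonicity $f(1|n)\le f(1|m)$ with $m\le n$, I would argue by exchangeability: for $m=n-1$, letting $I$ be independent uniform on $\{1,\ldots,n\}$, the identity
\[
\frac{1}{n}\sum_{j=1}^n \delta_{X_j} \;=\; \EE_I\!\left[\frac{1}{n-1}\sum_{j\neq I}\delta_{X_j}\right]
\]
together with convexity of $\nu\mapsto W^p_{[0,1]^d}(\nu,1)$ yields, via Jensen and exchangeability (so that $\sum_{j\neq I}\delta_{X_j}$ has the law of $\sum_{j=1}^{n-1}\delta_{X_j}$), $f(1|n)\le f(1|n-1)$; the general case follows by iteration, and $m=0$ is immediate from the definition of $f(1|0)$.

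Granting these, Proposition~\ref{prop:depois} reduces matters to showing $f(L)\to\frefinf$. A direct rescaling $x\mapsto Lx$ from $[0,1]^d$ to $Q_L$ applied to $\fref(L)$ gives the identity
\[
\fref(L) \;=\; \EE\!\left[\frac{N_L}{L^d}\,f(L|N_L)\right], \qquad\text{hence}\qquad \fref(L) - f(L) \;=\; \EE\!\left[\Bigl(\frac{N_L}{L^d}-1\Bigr) f(L|N_L)\right],
\]
and since $\fref(L)\to\frefinf$ by Theorem~\ref{theo:PoiLeb}, it is enough to prove that this last term vanishes as $L\to\infty$.

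The main obstacle is that the crude bound $f(L|n)\le L^p d^{p/2}$ combined with $\EE[|N_L/L^d-1|]\lesssim L^{-d/2}$ only gives an error of order $L^{p-d/2}$, which is inconclusive when $p\ge d/2$. I would circumvent this by splitting the expectation on the concentration event $A := \{|N_L - L^d|\le L^{d/2}\sqrt{2\beta\log L}\}$ with fixed $\beta > 2p$. On $A$, we have $N_L\gtrsim L^d$, and the sharp bound $f(L|n)\lesssim L^p n^{-p/d}$ yields $f(L|N_L)\lesssim 1$ together with $|N_L/L^d - 1|\lesssim L^{-d/2}\sqrt{\log L}$, so the contribution is $O(L^{-d/2}\sqrt{\log L})$. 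On $A^c$, the Cram\'er--Chernoff bound \eqref{eq:Cramer} gives $\PP(A^c)\lesssim L^{-\beta}$, and combining this with the trivial bound $f(L|N_L)\le L^p d^{p/2}$ and a Cauchy--Schwarz estimate on $N_L$ bounds the contribution by $L^{p-\beta/2}$, which vanishes by the choice of $\beta$. Thus $f(L)\to\frefinf$, and Proposition~\ref{prop:depois} concludes the proof.
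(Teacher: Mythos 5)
Your proposal is correct and follows essentially the same route as the paper: the same choice of $f(L|n)$, monotonicity via convexity of the transport cost and exchangeability, the scaling identity $\fref(L|n)=\frac{n}{L^d}f(L|n)$, and then Proposition~\ref{prop:depois}. The only (harmless) deviation is in controlling $\EE\bigl[(\tfrac{N_L}{L^d}-1)f(L|N_L)\bigr]$, where you use a concentration event together with the sharp bound $f(L|n)\lesssim L^p n^{-p/d}$ from \eqref{eq:boundE}, while the paper instead bounds $\EE[f(L|N_L)]$ uniformly via the boundedness of $\fref$ and applies H\"older's inequality with exponent close to $1$; both arguments are valid.
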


\begin{proof}
Recalling the notation $Q_1= [0,1]^d$, we introduce the function
\begin{align*}
 f(L|n) = L^{p} \EE\sqa{  W_{Q_1}^p\bra{ \frac 1 n \sum_{i=1}^n \delta_{X_i}, 1}}, \quad \text{if $n \ge 1$,}
\end{align*}
and $f(L|0) = 0$. It is clearly bounded and $p$-homogeneous in the sense of Proposition~\ref{prop:depois}. To show monotonicity, let $1\le m\le n$ and use the identity
\[  \frac 1 n \sum_{i=1}^n \delta_{X_i} =  {n \choose m}^{-1} \sum_{\substack{ I \subseteq \cur{1, \ldots, n} \\ |I| = m}}  \frac 1 m \sum_{i \in I} \delta_{X_i}\]
in combination with the convexity of the transportation cost, to obtain
\begin{align*} W_{Q_1}^p\bra{ \sum_{i=1}^n \delta_{X_i}, 1 } & = W_{Q_1}^p\bra{{n \choose m}^{-1} \sum_{\substack{ I \subseteq \cur{1, \ldots, n} \\ |I| = m}} \frac 1 m \sum_{i \in I} \delta_{X_i}, {n \choose m}^{-1} \sum_{\substack{ I \subseteq \cur{1, \ldots, n} \\ |I| = m}} 1 } \\
&  \le {n \choose m}^{-1} \sum_{\substack{ I \subseteq \cur{1, \ldots, n} \\ |I| = m}}  W_{Q_1}^p\bra{ \frac 1 m \sum_{i \in I} \delta_{X_i}, 1 }. 
\end{align*}
Taking expectation yields $f(1|n) \le f(1|m)$, since for  $I \subseteq \cur{1,\ldots, n}$ with $|I| = m$, $\cur{X_i}_{i\in I}$ have the same law as $\cur{X_i}_{1 =1}^m$. 

Let $\mu$ be a Poisson point process of intensity one on $\R^d$ and for $L>1$ let $N_L=\mu(Q_L)$ be a Poisson random variable of parameter $L^d$. For $n \ge 1$, we notice that
\begin{equation}\label{eq:identity-f-fref} \begin{split} f(L|n) & = \EE\sqa{ \left.  W_{Q_L}^p\bra{ \frac{\mu}{\mu(Q_L)}, \frac 1 {|Q_L|}} \right| N_L = n } \\
& = \frac{L^d}{n} \fref(L|n),\end{split}\end{equation}
where we write, extending the notation from Section~\ref{sec:matching-ref},
\[ \fref(L|n) = \EE\sqa{ \left.  \frac{1}{|Q_L|} W_{Q_L}^p\bra{ \mu, \kappa} \right| N_L = n },
\]
with $\kappa = \mu(Q_L)/|Q_L|$ (and $\fref(L|0)=0$). Notice that 
\[
 \fref(L)=\EE[\fref(L|N_L)].
\]
In order to apply Proposition~\ref{prop:depois} and conclude the proof, we argue that
\begin{equation}\label{eq:thesis-depois-ref-limit} \lim_{L \to \infty}  \EE \sqa{\abs{ f\bra{ L | N_L} - \fref\bra{L| N_L}} }   =0.
\end{equation}
To this aim, we first bound $\EE\sqa{ f(L|N_L)}$ uniformly from above as $L \to \infty$. For this we combine the following two simple facts. 
First,  since $f(L|n)\les L^p$, letting $A_L=\{N_L\ge |Q_L|/2\}$, we have 
\[
 \EE\lt[f(L|N_L)|A_L^c\rt]\les L^p.
\]
Second, since by \eqref{eq:Cramer} $\PP[A_L]\ges 1$,
\[
 \EE[f(L|N_L) | A_L]\stackrel{\eqref{eq:identity-f-fref}}{=}\EE\lt[\lt.\frac{L^d}{N_L}\fref(L|N_L) \rt| A_L\rt]
 \les \EE[\fref(L|N_L)]=\fref(L)\les 1,
\]
where in the last inequality we used that $\fref$ is bounded as a consequence of Theorem \ref{theo:PoiLeb}. Therefore,
\[
\begin{split}
\EE[f(L|N_L)]
  & =\EE[ f(L|N_L) | A_L^c] \PP[A_L^c]+ \EE[ f(L|N_L) | A_L] \PP[A_L]\\
  &\les L^p \PP[A_L^c] +1\\
  &\stackrel{\eqref{eq:Cramer}}{\les} 
  L^p\exp(-cL^d)+1\les 1.
\end{split}
\]
% where we used \eqref{eq:Cramer} to bound $\PP\bra{\mu(Q_L)< L^d /2 }$ and the inequality
% \[ \EE[\fref(L|\mu(Q_L))|\mu(Q_L)\ge L^d/2] \le \fref(L) \les 1,\]
% which holds since $\fref(L)$ converges towards $\frefinf$, by  Theorem~\ref{theo:PoiLeb}.  
Using H\"older inequality with $\frac{1}{q}+\frac{1}{q'}=1$, and the fact that $f(L|N_L)\les L^p$,
\[
\begin{split}
 \EE\sqa{\lt|f(L|N_L)-\fref(L|N_L)\rt|}& \stackrel{\eqref{eq:identity-f-fref}}{=}\EE\sqa{\lt|\frac{N_L}{|Q_L|}-1\rt|f(L|N_L)}\\
 &\le \EE\lt[\lt|\frac{N_L}{|Q_L|}-1\rt|^{q'}\rt]^\frac{1}{q'}\EE\sqa{ f\bra{ L |N_L}^{q} }^{\frac{1}{q}}.\\
 & \stackrel{\eqref{eq:momentPoi}}{\les_q} L^{-\frac{d}{2}} L^{\frac{p(q-1)}{q}}\EE\sqa{ f\bra{ L | N_L} }^{\frac{1}{q}}\les_q L^{- \frac{d}2 +\frac{p(q-1)}{q} }.
\end{split}\]
Choosing $q$ close enough to $1$, in particular $1<q<\frac{2p}{2p-d}$ if $p>\frac{d}{2}$, we get \eqref{eq:thesis-depois-ref-limit}. %that is $ and $q$ arbitrary otherwise,
 \end{proof}

Arguing similarly, we obtain the corresponding result for the bi-partite matching on the unit cube, that is Theorem \ref{theo:biintro}, which we restate for the reader's convenience.

\begin{theorem}\label{theo:match-bipartite-unit-cube}
Let $d \ge 3$, $p\ge 1$, and $(X_{i})_{i \ge 1}$, $(Y_{i})_{i \ge 1}$, be independent uniform random variables on $[0,1]^d$. 
Then,  
\[ \lim_{n \to \infty} n^{\frac{p}{d}} \EE\sqa{  W_{[0,1]^d}^p\bra{ \frac 1 n \sum_{i=1}^n \delta_{X_i}, \frac 1 n \sum_{i=1}^n \delta_{Y_i}}}  = \fbiinf,\]
with $\fbiinf$ as in Theorem~\ref{theo:bi}.
\end{theorem}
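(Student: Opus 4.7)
The plan is to apply Proposition~\ref{prop:depois} with $k=2$, following exactly the same de-Poissonization strategy as in the proof of Theorem~\ref{theo:match-ref-measure-unit-cube}, but now with the reference Lebesgue measure replaced by a second empirical measure. First, I would define, for $n_1, n_2 \ge 1$,
\[
f(L | n_1, n_2) = L^p \, \EE \sqa{ W_{Q_1}^p\bra{ \frac{1}{n_1}\sum_{i=1}^{n_1}\delta_{X_i},\, \frac{1}{n_2}\sum_{j=1}^{n_2}\delta_{Y_j}}},
\]
and $f(L|n_1,n_2)=0$ as soon as $n_1 n_2 = 0$. The $p$-homogeneity is built-in, and $f(1|n_1,n_2)\le \diam(Q_1)^p\les 1$ is the trivial diameter bound for probability measures supported in $Q_1$. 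For the monotonicity hypothesis, given $m_i\le n_i$, I would use the representation
\[
\frac{1}{n_i}\sum_{k=1}^{n_i}\delta_{Z_k} = \binom{n_i}{m_i}^{-1}\sum_{|I|=m_i}\frac{1}{m_i}\sum_{k\in I}\delta_{Z_k}
\]
applied to both samples, and invoke the joint convexity of $(\mu,\nu)\mapsto W_{Q_1}^p(\mu,\nu)$; taking expectations and exploiting exchangeability of the i.i.d.\ families yields $f(1|n_1,n_2)\le f(1|m_1,m_2)$, as required.

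Next, I would relate $f$ to $\fbi$. Let $\mu, \lambda$ be two independent Poisson point processes of intensity one on $\R^d$, and set $N_{L,1}=\mu(Q_L)$, $N_{L,2}=\lambda(Q_L)$. Conditionally on $\{N_{L,1}=n_1,N_{L,2}=n_2\}$, $\mu$ (resp.\ $\lambda$) has the law of the empirical measure of $n_1$ (resp.\ $n_2$) i.i.d.\ uniform points in $Q_L$. Using the linearity of $W_{Q_L}^p$ in a common total mass (the two measures $\mu$ and $\kappa\lambda$ both have total mass $n_1$, with $\kappa=n_1/n_2$) together with the scaling $Q_L\to Q_1$ that multiplies $W_p^p$ by $L^p$, one obtains the key identity
\[
\fbi(L|n_1,n_2) := \EE\sqa{\left.\frac{1}{|Q_L|}W_{Q_L}^p(\mu,\kappa\lambda)\,\right|\,N_{L,1}=n_1,\,N_{L,2}=n_2} = \frac{n_1}{L^d}f(L|n_1,n_2),
\]
so that $\fbi(L) = \EE[\fbi(L|N_{L,1},N_{L,2})]$.

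The final step is to show
\[
\EE\sqa{\lt|f(L|N_{L,1},N_{L,2}) - \fbi(L|N_{L,1},N_{L,2})\rt|} \longto 0 \quad \text{as } L \to \infty,
\]
after which Proposition~\ref{prop:depois} applied along the diagonal $n_1=n_2=n$, combined with $\fbiinf=\lim_L \fbi(L)$ from Theorem~\ref{theo:bi}, yields the conclusion. As in the reference-measure case, I would first establish $\EE[f(L|N_{L,1},N_{L,2})] \les 1$ uniformly in $L$: on the high-probability event $\{N_{L,1}\ge |Q_L|/2\}$ the identity above gives $f\le 2\fbi$, and Theorem~\ref{theo:bi} provides $\fbi(L)\les 1$; on the complementary event, the trivial bound $f\les L^p$ combined with the Cram\'er-Chernoff estimate \eqref{eq:Cramer} contributes a negligible $L^p\exp(-cL^d)$ term. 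Once this is in hand, the relation
\[
\lt|f-\fbi\rt| = \lt|1-\frac{N_{L,1}}{L^d}\rt|\,f,
\]
combined with H\"older's inequality (with exponent $1<q<2p/(2p-d)$ when $p>d/2$, and any $q>1$ otherwise), the Poisson moment estimate \eqref{eq:momentPoi}, and $f(L|N_L)\les L^p$ used to bound $\EE[f^q]$ by $L^{p(q-1)}\EE[f]$, gives the required decay. The main obstacle is precisely this uniform $L^1$ bound on $f(L|N_L)$, which hinges on the already-established boundedness $\fbi(L)\les 1$ from Theorem~\ref{theo:bi}; beyond it, the proof is a faithful adaptation of the reference-measure argument.
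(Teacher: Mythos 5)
Your proposal is correct and follows essentially the same route as the paper: the paper defines the same $f(L|n_1,n_2)$, proves boundedness, $p$-homogeneity and monotonicity via the double binomial averaging and convexity of the transport cost, and then "concludes as in the reference-measure case", which is precisely the part you fill in (the identity $\fbi(L|n_1,n_2)=\frac{n_1}{L^d}f(L|n_1,n_2)$, the uniform $L^1$ bound via the event $\{N_{L,1}\ge |Q_L|/2\}$ and $\fbi(L)\les 1$, and the H\"older/Poisson-moment estimate with $1<q<\frac{2p}{2p-d}$ when $p>d/2$). No gaps; your write-up matches the intended argument.
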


\begin{proof}
The proof is very similar to that of Theorem~\ref{theo:match-ref-measure-unit-cube}, but in this case we define the function 
\[  f(L| n_1, n_2) = L^p \EE\sqa{  W_{[0,1]^d}^p\bra{ \frac 1 {n_1} \sum_{i=1}^{n_1} \delta_{X_i}, \frac 1 {n_2} \sum_{i=1}^{n_2} \delta_{Y_i}}}\]
for $n_1$, $n_2 \ge 1$, and let $f(L | n_1, n_2) = 0$ otherwise. It is clearly bounded and $p$-homogeneous. 
Using the identities
\[ \frac 1 {n_1} \sum_{i=1}^{n_1} \delta_{X_i} =  {n_1 \choose m_1}^{-1}{n_2 \choose m_2}^{-1}  \sum_{\substack{ I_1 \subseteq \cur{1, \ldots, n_1} \\ |I| = m_1}} \sum_{\substack{ I_2 \subseteq \cur{1, \ldots, n_2} \\ |I| = m_2}}  \frac 1 {m_1} \sum_{i \in I_1} \delta_{X_i}\]
\[ \frac 1 {n_2} \sum_{i=1}^{n_2} \delta_{Y_i} =  {n_1 \choose m_1}^{-1}{n_2 \choose m_2}^{-1}  \sum_{\substack{ I_1 \subseteq \cur{1, \ldots, n_1} \\ |I| = m_1}} \sum_{\substack{ I_2 \subseteq \cur{1, \ldots, n_2} \\ |I| = m_2}}  \frac 1 {m_2} \sum_{i \in I_2} \delta_{Y_i}\]
we obtain monotonicity arguing analogously as in Theorem~\ref{theo:match-ref-measure-unit-cube}. The proof is then concluded as in  Theorem \ref{theo:match-ref-measure-unit-cube} and we omit the details.
\end{proof}
% For $n_1$, $n_2 \ge 1$, we notice that
% \[ \begin{split} f(L|n_1, n_2) & = \EE\sqa{ \left.  W_{Q_L}^p\bra{ \frac{\mu}{\mu(Q_L)}, \frac {\lambda} {\lambda(Q_L)} \right| \mu(Q_L) = n_1,  \lambda(Q_L) = n_2 }} \\
% & = \frac{L^d}{n_1} \fbi(L|n_1, n_2),\end{split}\]
% where we write, extending this time the notation from Section~\ref{sec:bipartite},
% \[ \fbi(L|n_1, n_2) = \EE\sqa{ \left.  \frac{1}{|Q_L|} W_{Q_L,p}^p\bra{ \mu, \lambda \kappa} \right| \mu(Q_L) = n_1, \lambda(Q_L)=n_2 },
% \]
% with $\kappa = \mu(Q_L)/\lambda(Q_L)$. 
% 
% To conclude it is sufficient to prove that
% \[ \lim_{L \to \infty}  \EE \sqa{\abs{ f\bra{ L | \mu(Q_L), \lambda(Q_L)} - \fbi\bra{L| \mu(Q_L), \lambda(Q_L)}} }  =0.
% \]
% This quite analogous to  Theorem~\ref{theo:match-ref-measure-unit-cube}, i.e., we first show that $\EE\sqa{f\bra{ L | \mu(Q_L), \lambda(Q_L)}}$ is uniformly bounded from above and then apply H\"older inequality. In the first step, it is sufficient to use the triangle inequality to obtain
% \[ f(L |n_1, n_2) \les  L^p \EE\sqa{  W_{Q_1}^p\bra{ \frac 1 {n_1} \sum_{i=1}^{n_1} \delta_{X_i},1}} + L^p \EE\sqa{  W_{Q_1}^p\bra{ \frac 1 {n_2} \sum_{i=1}^{n_2} \delta_{Y_i}, 1}},\]
% so that letting $n_1 = \mu(Q_L)$, $n_2 = \lambda(Q_L)$ and taking expectations we may argue as in Theorem~\ref{theo:match-ref-measure-unit-cube} for each term in the right hand side. The second step is also very similar, so we omit the details.

\begin{remark}[Convergence rates]\label{rem:rates}
An inspection of the proof of Proposition~\ref{prop:depois} shows that one can transfer rates of convergence (even only one-sided)
from the Poisson case to that of a fixed number of points. In the case of matching to the reference measure this leads to the inequality
\[  \frefinf  \le n^{\frac{p}{d}}\EE\sqa{  W_{Q_1}^p\bra{ \frac 1 n \sum_{i=1}^n \delta_{X_i}, 1}} + C n^{ \frac{2-d}{2d}},\]
while for  the bi-partite matching we obtain
\[  \fbiinf  \le   n^{\frac{p}{d}} \EE\sqa{  W_{Q_1}^p\bra{ \frac 1 n \sum_{i=1}^n \delta_{X_i}, \frac 1 n \sum_{i=1}^n \delta_{Y_i}}}+  C n^{\frac{ 2-d}{2dp}},\]
for some constant $C \ge 0$. Besides being one-sided bounds, these are still far from the conjectured rates in \cite{CaLuPaSi14}, which for $p=2$ read
\[ n^{\frac{2}{d}} \EE\sqa{  W_{Q_1}^2\bra{ \frac 1 n \sum_{i=1}^n \delta_{X_i}, \frac 1 n \sum_{i=1}^n \delta_{Y_i}}}  = \fbiinf  +  C n^{\frac{2-d}{d}} + o(n^{\frac{2-d}{d}}),\]
with an explicit constant $C$.
\end{remark}

\begin{remark}[Strong convergence]\label{rem:strong-convergence}
If $p<d$, standard concentration of measure arguments allow to obtain strong convergence from convergence of the expected values (see also \cite{AmStTr16} for a similar argument in the case $p=d=2$). Let us consider for example the case of bi-partite matching, and show that, both  $\PP$-a.s.\ and in $L^1(\PP)$,
\begin{equation}\label{eq:strong} \lim_{n \to \infty}  n^{\frac{p}{d}} W_{Q_1}^p\bra{ \frac 1 n \sum_{i=1}^n \delta_{X_i}, \frac 1 n \sum_{i=1}^n \delta_{Y_i}} = \fbiinf,\end{equation}
with $(X_i)_{i \ge 1}$, $(Y_i)_{i\ge 1}$ independent uniform random variables on $Q_1 = [0,1]^d$.

For any $n \ge 1$, the function
\begin{equation*}\label{eq:function-concentration}
[0,1]^{d \times 2n} \ni (x_1, \ldots, x_n, y_1, \ldots, y_n) \mapsto n^{\frac{1}{d}} W_{p}\bra{ \mu_{\bra{x_i}_{i=1}^n},  \mu_{\bra{y_i }_{i=1}^n}}, 
\end{equation*}

is $2 n^{\frac{1}{d}-\min\cur{\frac 1 2, \frac 1 p}}$-Lipschitz with respect to the Euclidean distance, where we write
\[ \mu_{\bra{x_i}_{i=1}^n} = \frac 1 n \sum_{i=1}^n \delta_{x_i},\quad \text{for $x_i \in [0,1]^d$.}\]
This relies on the triangle inequality \eqref{eq:triangle} and  the fact that 
 \[ (x_i)_{i=1}^n \mapsto \mu_{\bra{x_i}_{i=1}^n} \quad \text{is   $n^{-\min\cur{\frac 1 2, \frac 1 p}}$-Lipschitz}\]
  if we endow the set of probability measures on $[0,1]^d$  with the Wasserstein distance of order $p$. Indeed, for $(x_i)_{i=1}^n, (y_i)_{i=1}^n \in [0,1]^{d \times n}$, then
\[  W_{p}\bra{\mu_{\bra{x_i}_{i=1}^n}, \mu_{\bra{y_i}_{i=1}^n }} \le \bra{ \frac 1 n \sum_{i=1}^n  |x_i-y_i|^p}^{\frac{1}{p}}
\le n^{-\min\cur{\frac 1 2, \frac 1 p}} \bra{ \sum_{i=1}^n  |x_i-y_i|^2}^{\frac{1}{2}}.\]
%Composition with the Lipschitz function $(\mu, \nu) \mapsto W_p(\mu, \nu)$ yields the regularity of \eqref{eq:function-concentration}.

Gaussian concentration for the uniform measure on the unit cube \cite[Prop. 2.8]{ ledoux2001concentration} yields that if 
\[ Z_n = n^{\frac{1}{d}}W_p\bra{\mu_{\bra{X_i}_{i=1}^n}, \mu_{\bra{Y_i}_{i=1}^n} },\]
then, for $r >0$,
\[ \PP \bra{ \abs{Z_n - \EE\sqa{Z_n}} \ge  r } \le 2 \exp\lt(- c n^{2\lt(\min\cur{\frac 1 2, \frac 1 p}-\frac{1}{d}\rt)}r^2\rt),\]
where $c>0$ is an absolute constant. A standard application of Borel-Cantelli Lemma gives that, if  $1 \le p <d$ and $d \ge 3$, then 
\begin{equation}\label{eq:as} \lim_{n \to \infty} \bra{Z_n -\EE\sqa{Z_n}} = 0, \quad \text{$\PP$-a.s.}\end{equation}
Moreover, using the layer-cake formula, we obtain the inequality
\begin{equation}\label{eq:poincare} \EE\sqa{ \abs{ Z_n - \EE\sqa{Z_n}}^p } \les  n^{\frac{p}{d} -\min\cur{\frac{p}{2}, 1} },\end{equation}
which is infinitesimal as $n \to \infty$.  To conclude, it is sufficient to argue that $\lim_{n \to \infty} \EE\sqa{Z_n} = (\fbiinf)^{1/p}$, since it yields by \eqref{eq:poincare} and \eqref{eq:as} that $\lim_{n \to \infty} Z_n =  (\fbiinf)^{1/p}$ in $L^p(\PP)$ and $\PP$-a.s.\ and hence \eqref{eq:strong}.
By Theorem~\ref{theo:match-ref-measure-unit-cube},  $\lim_{n \to \infty} \EE\sqa{Z_n^p} = \fbiinf$. By Jensen's inequality,
\[ \limsup_{n \to \infty} \EE\sqa{Z_n}^p \le \lim_{n \to \infty} \EE\sqa{Z_n^p} =  \fbiinf.\]
By the elementary inequality \eqref{eq:elementary}, we have, for any $\eps>0$, 
\[  Z_n^p \le \bra{ \EE\sqa{Z_n} + |Z_n - \EE\sqa{Z_n}|}^p \le \bra{1+\eps}\EE\sqa{Z_n}^p + \frac{C}{\eps^{p-1}} |Z_n - \EE\sqa{Z_n}|^p.\]
Taking expectation and letting $n \to \infty$, using \eqref{eq:poincare}, we obtain
\[ \fbiinf = \lim_{n \to \infty} \EE\sqa{Z_n^p} \le \bra{1+\eps} \liminf_{n\to \infty} \EE\sqa{Z_n}^p.\]
Letting $\eps\to 0$ we conclude.
%
%We use the following consequence of the Gaussian Poincaré inequality: for $p \ge 1$,
%\[ \int_{\R^{dn}} \abs{ f - \int_{\R^{dn}} f d \gamma}^{p} d \gamma \les L^p,\]
%for any Euclidean $L$-Lipschitz function $f$ on $\R^{dn}$, where the implicit constant depends on $p$ only, and $\gamm$ denotes a standard Gaussian measure on $\R^{dn}$. To transfer the inequality from the Gaussian setting to the cube with uniform measure, it is sufficient to compose each coordinate with the one-dimensional Gaussian cumulative distribution function
%\[ F(t) = \int_{-\infty}^t e^{-x^2/2} \frac{d x}{\sqrt{2 \pi}},\]
%so that given any $L$-Lipschitz function $f$ on $[0,1]^{dn}$, the composition $f(Fz(x_1), \ldots, F(x_{dn}))$ is a $L/\sqrt{2 \pi}$-Lipschitz function on $\R^{dn}$. On the other side, the push-forward of a standard Gaussian measure is the uniform measure on $[0,1]^{dn}$. Hence,
%

%[ f(x_1, \ldots, x_n) = \bra{ W_{Q_1}^p\bra{ \frac 1 n \sum_{i=1}^n \delta_{x_i},  1}}^{1/p}\]

\end{remark}

\section*{Acknowledgments}
 M.G was partially supported by the  project 
ANR-18-CE40-0013 SHAPO financed by the French Agence Nationale de la Recherche (ANR) and by the LYSM LIA AMU CNRS ECM INdAM. 
M.G. also thanks the Centro de Giorgi in Pisa for its hospitality.  D.T.\ was partially supported by the University of Pisa, Project PRA 2018-49, and Gnampa project 2019 ``Propriet\`a analitiche e geometriche di campi aleatori''.
\bibliographystyle{amsplain}

\bibliography{OT}
 \end{document}